\numberwithin{equation}{section}
\newtheorem{thm}{Theorem}{\bf}{\it}
\newtheorem{cor}[thm]{Corollary}
\newtheorem{lem}[thm]{Lemma}
\newtheorem{conj}[thm]{Conjecture}
\newcommand{\hyperg}[4]{\: _2\! F_1\! \left[ \begin{array}{c} #1,\, #2 \\ #3 \end{array} ;\,  #4 \right]}
\newcommand{\RePt}{\mathrm{Re}\,}
\newcommand{\ImPt}{\mathrm{Im}\,}
\newcommand{\ball}{\mathbb{B}_{n+1}}
\newcommand{\sphere}{\mathbb{S}_{n+1}}
\newcommand{\disk}{\mathbb{D}}
\newcommand{\calU}{\mathcal{U}}
\newcommand{\bfi}{\mathbf{i}}
\newcommand{\bfbeta}{\boldsymbol{\beta}}
\newcommand{\bfrho}{\boldsymbol{\rho}}
\newcommand{\bbB}{\mathbb{B}}
\newcommand{\bbC}{\mathbb{C}}
\newcommand{\bbR}{\mathbb{R}}
\newcommand{\bbH}{\mathbb{H}}
\begin{document}

\title[The Bergman and Cauchy-Szeg\"o projections]{Norm estimates for the Bergman and Cauchy-Szeg\"o projections
over the Siegel upper half-space}

\thanks{This work was supported by the National Natural Science
Foundation of China grants 11571333, 11471301.}

\author{Congwen Liu}
\email{cwliu@ustc.edu.cn}

\address{School of Mathematical Sciences,
University of Science and Technology of China,\\
Hefei, Anhui 230026,
People's Republic of China\\
and\\
Wu Wen-Tsun Key Laboratory of Mathematics\\
USTC, Chinese Academy of Sciences}

\subjclass[2010]{Primary 32A35, 47G10; Secondary 32A26, 30E20}

\begin{abstract}
We obtain several estimates for the $L^p$ operator norms of the Bergman
and Cauchy-Szeg\"o projections over the the Siegel upper half-space.
As a by-product, we also determine the precise value of the $L^p$ operator norm of a
family of integral operators over the Siegel upper half-space.
\end{abstract}

\keywords{Siegel upper half-space; Bergman projection; Cauchy-Szeg\"o projection; norm estimates}

\maketitle

\section{Introduction}

Let $\calU^n$ be the Siegel upper half-space (or the generalized half-plane, following the terminology
of Kor\'anyi \cite{Kor65,KS68,KV71,KW65})
\[
\calU^{n} := \left\{ z\in \bbC^{n+1}: \ImPt z_{n+1} > |z^{\prime}|^2 \right\}
\]
and let $b\calU^n$ be its boundary in $\bbC^{n+1}$. Here and throughout, we use the notation
\[
z=(z^{\prime},z_{n+1}), \quad \text{where } z^{\prime}=(z_1,\ldots,z_n)\in \bbC^n \text{ and } z_{n+1}\in \bbC^1.
\]
Note that $\calU^0=\bbC_{+}:=\{z\in \bbC: \ImPt z>0\}$, the classical upper half-plane.
$\calU^n$ is biholomorphically equivalent to the unit ball $\ball$ in
$\bbC^{n+1}$, via the Cayley transform $\Phi:\bbB_{n+1}\to \calU^n$ given by
\[
(z^{\prime}, z_{n+1})\; \longmapsto\; \left( \frac {z^{\prime}}{1+z_{n+1}},
i\frac {1-z_{n+1}}{1+z_{n+1}} \right),
\]
and so it is also referred to as the unbounded realization of the unit ball in $\bbC^{n+1}$.

As usual, for $p>0$, the space $L^p(\calU^n)$ consists of all Lebesgue measurable
functions $f$ on $\calU^n$ for which
\begin{equation*}
\|f\|_p:=\bigg\{\int\limits_{\calU^n} |f(z)|^p dV(z)\bigg\}^{1/p}
\end{equation*}
is finite, where $dV=dm_{2n+2}$ is the Lebesgue measure on $\bbC^{n+1}$.
The Bergman space $A^p(\calU^n)$ is the closed subspace of $L^p(\calU^n)$ consisting of
holomorphic functions on $\calU^n$.
The orthogonal projection from $L^2(\calU^n)$ onto $A^2(\calU^n)$, known as the Bergman projection,
can be expressed as an integral operator:
\[
(P_{\calU^n}f)(z) = \int\limits_{\calU^n} K_{\calU^n}(z,w) f(w) dV(w),
\]
with the Bergman kernel
\begin{equation}\label{eqn:bergknl}
K_{\calU^n}(z,w)= \frac {(n+1)!}{4\pi^{n+1}}\,\left[\frac {i}{2} (\bar{w}_{n+1}-z_{n+1})
- z^{\prime} \cdot \overline{w^{\prime}} \right]^{-n-2}.
\end{equation}
See \cite[p.56, Lemma 5.1]{Gin64}. 
In the sequel, we shall use the notation
\begin{equation*}
\bfrho(z,w) ~:=~ \frac {i}{2} (\bar{w}_{n+1}-z_{n+1})
- z^{\prime} \cdot \overline{w^{\prime}}.
\end{equation*}
It has been long known that the Bergman projection $P_{\calU^n}$ extends to a
bounded operator from $L^p(\calU^n)$ to $A^p(\calU^n)$, for $1<p<\infty$. See, for instance, \cite[Lemma 2.8]{CR80}.

%

In this paper, we are concerned with estimates of the operator norm of $P_{\calU^n}$ on
$L^p(\calU^n)$. Our first main result is the following.

\begin{thm}\label{thm:main0}
For $1<p<\infty$, we have
\begin{equation}\label{eqn:bergmanestimate}
\frac {\Gamma\left(\frac {n+2}{p}\right) \Gamma\left(\frac {n+2}{q}\right)}{
 \Gamma^2 \left(\frac {n+2}{2}\right)} ~\leq~ \|P_{\calU^n}\|_{p\to p}
 ~\leq~ \frac {(n+1)!}{\Gamma^2\left(\frac {n+2}{2}\right)} \frac {\pi}{\sin \frac {\pi}{p}},
\end{equation}
where $q:=\frac {p}{p-1}$ is the conjugate exponent of $p$ and
\[
\|P_{\calU^{n}}\|_{p\to p} := \|P_{\calU^{n}}\|_{L^p(\calU^{n})\to A^p(\calU^{n})}
= \sup \left\{ \frac {\|P_{\calU^{n}}f\|_p} {\|f\|_p} : f \in L^p(\calU^{n}), f\neq 0 \right\}.
\]
\end{thm}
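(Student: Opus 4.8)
The plan is to prove the two inequalities in \eqref{eqn:bergmanestimate} by entirely different mechanisms: a Schur test for the upper estimate, and a duality/test-function argument for the lower one.

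\emph{Upper bound.} Since $|K_{\calU^n}(z,w)|=\frac{(n+1)!}{4\pi^{n+1}}|\bfrho(z,w)|^{-n-2}$, it suffices to bound the positive integral operator $P^{+}$ with this kernel, because $\|P_{\calU^n}\|_{p\to p}\le\|P^{+}\|_{p\to p}$. I would apply Schur's test with the weight $h(w)=r(w)^{-1/(pq)}$, where $r(w):=\ImPt w_{n+1}-|w'|^2=\bfrho(w,w)>0$ is the height function; here $h(w)^q=r(w)^{-1/p}$ and $h(w)^p=r(w)^{-1/q}$, so the whole test reduces to the single integral
\[
J_s(z):=\int_{\calU^n}|\bfrho(z,w)|^{-n-2}\,r(w)^{-s}\,dV(w),\qquad s=\tfrac1p,\ \tfrac1q.
\]
First I would show $J_s(z)=C_s\,r(z)^{-s}$. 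The group generated by the Heisenberg translations and the nonisotropic dilations $(w',w_{n+1})\mapsto(\delta w',\delta^2 w_{n+1})$ acts transitively on $\calU^n$ with \emph{constant} Jacobian, and both $|\bfrho(z,w)|$ and $r(w)$ transform covariantly under it, so $J_s(z)/r(z)^{-s}$ is constant and equals $J_s\big((0,i)\big)$.

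\emph{Evaluation of $C_s$.} At the base point $z_0=(0,i)$ one has $\bfrho(z_0,w)=\tfrac12(1+i\bar w_{n+1})$, so writing $w_{n+1}=u+iv$ the integral decouples: integrating first in $w'$ over the ball $\{|w'|^2<v\}$ gives a factor $\tfrac{\pi^n\Gamma(1-s)}{\Gamma(n+1-s)}v^{n-s}$, the $u$-integration gives $\sqrt\pi\,\tfrac{\Gamma((n+1)/2)}{\Gamma((n+2)/2)}(1+v)^{-n-1}$, and the $v$-integration gives the Beta factor $B(n+1-s,s)$. The $\Gamma(n+1-s)$ cancels, and after the Legendre duplication formula and the reflection formula $\Gamma(s)\Gamma(1-s)=\pi/\sin\pi s$ the constant collapses to
\[
\frac{(n+1)!}{4\pi^{n+1}}\,C_s=\frac{(n+1)!}{\Gamma^2\!\big(\tfrac{n+2}{2}\big)}\,\frac{\pi}{\sin\pi s}.
\]
Inserting $s=1/p$ and $s=1/q$ and using $\sin(\pi/q)=\sin(\pi/p)$, both Schur constants equal $\frac{(n+1)!}{\Gamma^2((n+2)/2)}\frac{\pi}{\sin(\pi/p)}$, whence $\|P_{\calU^n}\|_{p\to p}\le C_1^{1/q}C_2^{1/p}$ is precisely the claimed bound.

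\emph{Lower bound and main obstacle.} For the genuine oscillatory projection I would argue by duality: for $f\in L^p\cap L^q$,
\[
\|P_{\calU^n}\|_{p\to p}\ \ge\ \frac{|\langle P_{\calU^n}f,f\rangle|}{\|f\|_p\,\|f\|_q},
\]
which requires only the scalar $\langle P_{\calU^n}f,f\rangle$ and the two norms, not $P_{\calU^n}f$ as a function. The natural test functions are built from powers of $\bfrho(z_0,\cdot)$, $\bfrho(\cdot,z_0)$ and $r$; their $L^p$ and $L^q$ norms are again given by the decoupled Beta integrals above and supply the factors $\Gamma(\tfrac{n+2}{p})$ and $\Gamma(\tfrac{n+2}{q})$ at the extremal exponents. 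The pairing itself is computed through a Forelli--Rudin-type evaluation of integrals of the form $\int_{\calU^n}\bfrho(z,w)^{-n-2}\bfrho(z_0,w)^{-a}\bfrho(w,z_0)^{-b}\,dV(w)$, in which the key one-dimensional identity is $\int_{\bbR}(\lambda+iu)^{-\alpha}(\lambda-iu)^{-\beta}\,du=\frac{2\pi\,\Gamma(\alpha+\beta-1)}{(2\lambda)^{\alpha+\beta-1}\Gamma(\alpha)\Gamma(\beta)}$. The crux is exactly here: because the fixed point $z_0$ is not preserved by the automorphism group, the covariance shortcut fails and the integral is genuinely three-point, producing a $\hyperg{a}{b}{c}{x}$ in general; one must (i) tame this hypergeometric and (ii) verify that optimizing over the exponents degenerates it to a quotient of Gamma functions landing \emph{exactly} on $\Gamma(\tfrac{n+2}{p})\Gamma(\tfrac{n+2}{q})/\Gamma^2(\tfrac{n+2}{2})$ rather than a weaker constant. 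The Schur half, by contrast, is essentially bookkeeping of Beta integrals and I anticipate no serious difficulty there.
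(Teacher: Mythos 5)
Your upper bound is correct and follows essentially the same route as the paper: the paper bounds $\|P_{\calU^n}\|_{p\to p}$ by $\|T_0\|_{p\to p}$ and proves the latter by Schur's test with exactly your weight $\bfrho(w,w)^{-1/(pq)}$; the key integral you call $J_s$ is the paper's \eqref{eqn:keylem2} (with $\gamma=-s$, $\theta=s$), which the paper evaluates by Heisenberg translations plus the Fubini formula \eqref{eqn:fubini} rather than by adjoining dilations, and your constant $\frac{(n+1)!}{\Gamma^2((n+2)/2)}\frac{\pi}{\sin \pi s}$ checks out.

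The lower bound, however, is a genuine gap: you name the crux yourself --- taming the hypergeometric from the three-point integral and showing the limit lands exactly on $\Gamma(\tfrac{n+2}{p})\Gamma(\tfrac{n+2}{q})/\Gamma^2(\tfrac{n+2}{2})$ --- but you do not carry it out, and the mechanism you propose has a concrete defect. The bound $\|P_{\calU^n}\|_{p\to p}\geq |\langle P_{\calU^n}f,f\rangle|/(\|f\|_p\|f\|_q)$ requires a single $f$ lying in $L^p\cap L^q$ that is simultaneously near-extremal for both pairings. The test functions that actually produce the sharp constant, namely $f_{\epsilon}(z)=\bfrho(\bfi,z)^{\kappa-\epsilon}\,\bfrho(z,\bfi)^{-(n+2)/2}$ with $|f_{\epsilon}(z)|=|\bfrho(z,\bfi)|^{-(n+2)/p-\epsilon}$ and $\kappa=(n+2)(\tfrac12-\tfrac1p)$, lie in $L^p$ for all $\epsilon>0$ but lie in $L^q$ only when $\epsilon>2\kappa=(n+2)(\tfrac1q-\tfrac1p)$, whereas the extremal regime is $\epsilon\to 0^{+}$; so for $p\neq 2$ your bilinear form with equal arguments is not even finite on the relevant family. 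The paper instead estimates the direct quotient $\|P_{\calU^n}f_{\epsilon}\|_p/\|f_{\epsilon}\|_p$: it computes $P_{\calU^n}f_{\epsilon}$ in closed form via \eqref{eqn:myformula2}, splits it as a main term $g_{\epsilon}$ (the hypergeometric evaluated at $1$ by Gauss's formula \eqref{eqn:gauss} --- which is where the two Gamma factors actually come from, not from the norms of the test functions) satisfying $|g_{\epsilon}|=c_{\epsilon}|f_{\epsilon}|$ pointwise, plus an error $h_{\epsilon}$ controlled by the Lipschitz estimate of Lemma \ref{lem:est4hyperg}, and then kills the error because $\sup_{0<\epsilon<\kappa}\|h_{\epsilon}\|_p<\infty$ while $\|f_{\epsilon}\|_p\to\infty$. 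Some device of this kind --- either passing to $\sup|\langle P_{\calU^n}f,g\rangle|/(\|f\|_p\|g\|_q)$ with two \emph{different} test functions, or abandoning duality for the direct quotient as the paper does --- is needed; as written, your lower-bound argument does not close.
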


This is motivated by recent work of Zhu \cite{Zhu06}, Dostani\'c \cite{Dos082}
and the author of the present paper \cite{Liu15}, in which sharp estimates for the norm of the Bergman projection over the unit ball
of $\bbC^n$ were obtained. It is also worth mentioning that, in the recent years, there has been increasing
interest in the study of the size of Bergman projection in various context other than the Bergman space.
See \cite{KM14,KV15,Per12,Per13,Per14,Vuj13}.

In the course of proving Theorem \ref{thm:main0}, we will precisely evaluate
the $L^p$ operator norm of a family of integral operators as follows.
For $\alpha>-1$, we define
\[
(T_{\alpha} f) (z) := \frac {(n+1)!}{4\pi^{n+1}} \int\limits_{\calU^n} \frac {\bfrho(w,w)^{\alpha}}
{\left|\bfrho(z,w)\right|^{n+2+\alpha}}
f(w) dV(w),
\]
for, say, continuous $f$ of compact support. It is a bounded map of $L^p(\calU^n)$ to itself, as \cite[Lemma 2.8]{CR80} shown.
Our second main result is the following.

\begin{thm}\label{thm:main2}
If $1\leq p<\infty$ and $p(1+\alpha)>1$ then
\begin{equation}\label{eqn:Talphanorm}
\|T_{\alpha}\|_{p\to p}=\frac {(n+1)! \Gamma\left(1+\alpha-\frac {1}{p}\right) \Gamma\left(\frac {1}{p}\right)}
{\Gamma^2\left(\frac {n+2+\alpha}{2}\right)}.
\end{equation}
\end{thm}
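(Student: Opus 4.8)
The plan is to exploit the positivity of the kernel of $T_\alpha$ together with Schur's test, taking a power of $\bfrho(w,w)$ as the test function; the whole argument then reduces to the evaluation of a single family of integrals. Writing the kernel as $\mathcal K(z,w)=\frac{(n+1)!}{4\pi^{n+1}}\,\bfrho(w,w)^\alpha|\bfrho(z,w)|^{-n-2-\alpha}\ge 0$, I first isolate
\[
I_a(z):=\int_{\calU^n}\frac{\bfrho(w,w)^{a}}{|\bfrho(z,w)|^{n+2+\alpha}}\,dV(w).
\]
The two affine symmetries of $\calU^n$ do the structural work. The Heisenberg translations preserve $\bfrho(w,w)$, $|\bfrho(z,w)|$ and $dV$, and act transitively on each level set $\{\bfrho(z,z)=c\}$, so $I_a(z)$ depends on $z$ only through $\bfrho(z,z)$; the dilations $(w',w_{n+1})\mapsto(\delta w',\delta^{2}w_{n+1})$ then force $I_a(z)=C(a)\,\bfrho(z,z)^{a-\alpha}$ by homogeneity, for a constant $C(a)$ to be computed.

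The constant $C(a)$ is obtained by evaluating at the base point $z_0=(0,i)$, where $\bfrho(z_0,z_0)=1$ and, writing $w_{n+1}=u+iv$, one has $\bfrho(z_0,w)=\tfrac12(1+v)+\tfrac{i}{2}u$. Passing to the variable $s=v-|w'|^2>0$ and performing three elementary one-variable reductions — the $u$-integral and the (polar) $w'$-integral are Beta integrals, and the $s$-integral is $\int_0^\infty s^a(1+s)^{-1-\alpha}\,ds=B(a+1,\alpha-a)$ — yields, after the Legendre duplication formula collapses the dimensional constants,
\[
C(a)=\frac{4\pi^{n+1}}{\Gamma^2\!\left(\frac{n+2+\alpha}{2}\right)}\,\Gamma(a+1)\,\Gamma(\alpha-a),
\]
valid precisely when $-1<a<\alpha$. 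This convergence range is exactly what will match the hypothesis $p(1+\alpha)>1$.

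For the upper bound I take $h(w)=\bfrho(w,w)^{-1/(pq)}$, so that $h(w)^{q}=\bfrho(w,w)^{-1/p}$ and $h(z)^{p}=\bfrho(z,z)^{-1/q}$. Using $|\bfrho(z,w)|=|\bfrho(w,z)|$ and the formula for $I_a$, both Schur integrals evaluate \emph{exactly}:
\[
\int \mathcal K(z,w)\,h(w)^{q}\,dV(w)=M\,h(z)^{q},\qquad \int \mathcal K(z,w)\,h(z)^{p}\,dV(z)=N\,h(w)^{p},
\]
with $M=\frac{(n+1)!}{4\pi^{n+1}}C(\alpha-\tfrac1p)$ and $N=\frac{(n+1)!}{4\pi^{n+1}}C(-\tfrac1q)$. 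The formula for $C$ shows that in fact $M=N=\frac{(n+1)!\,\Gamma(1+\alpha-1/p)\,\Gamma(1/p)}{\Gamma^2((n+2+\alpha)/2)}$, whence $M^{1/q}N^{1/p}$ equals the asserted value, and the two convergence constraints on $C$ both reduce to $p(1+\alpha)>1$. The endpoint $p=1$ is handled directly, since for a positive kernel $\|T_\alpha\|_{1\to1}=\sup_w\int \mathcal K(z,w)\,dV(z)=\frac{(n+1)!}{4\pi^{n+1}}C(0)$, which again agrees with the stated formula.

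It remains to prove the matching lower bound, and this is where the genuine difficulty lies. Tracking the equality case in the Hölder inequality underlying Schur's test identifies the formal extremiser as $f_*(w)=\bfrho(w,w)^{-1/p}$, which satisfies $T_\alpha f_*=M^{1/q}N^{1/p}f_*$ pointwise but fails to lie in $L^p(\calU^n)$ — it diverges at the boundary, at infinity, and in the non-compact $u$- and $w'$-directions. I would therefore test $T_\alpha$ against a family $f_\epsilon(w)=\bfrho(w,w)^{-1/p}\,\mathbf{1}_{\Omega_\epsilon}(w)$, with regions $\Omega_\epsilon\uparrow\calU^n$ chosen compatibly with the dilation and Heisenberg symmetries so that on the bulk of $\Omega_\epsilon$ the operator reproduces $f_\epsilon$ up to the factor $M^{1/q}N^{1/p}$, and then show that $\|T_\alpha f_\epsilon\|_p/\|f_\epsilon\|_p\to M^{1/q}N^{1/p}$. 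The main obstacle is the quantitative control of the boundary and tail contributions as $\epsilon\to0$: arranging the cutoff in the several non-compact directions so that it interacts correctly with the homogeneity of the kernel is the delicate point, and is precisely what converts the exact equalities in Schur's test into a genuine sharpness statement.
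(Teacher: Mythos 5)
Your upper bound is correct and essentially the paper's argument: Schur's test with the test function $\bfrho(z,z)^{-1/(pq)}$, both Schur integrals evaluating exactly to the same constant, plus the standard $L^1$ endpoint identity for nonnegative kernels. Your evaluation of $I_a$ by Heisenberg translation to the base point, dilation homogeneity, and one-variable Beta integrals is exactly how the paper proves its Lemma 3.3 (formulas \eqref{eqn:keylem1}--\eqref{eqn:keylem2}), and your constant $C(a)$ and the convergence range $-1<a<\alpha$ agree with it. Note also that your $p=1$ computation in fact gives the lower bound at that endpoint as well.

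The genuine gap is the lower bound for $1<p<\infty$, which is the substance of the theorem and which you only sketch. Your plan --- truncate the formal extremiser $f_*(w)=\bfrho(w,w)^{-1/p}$ by indicator functions of exhausting regions $\Omega_\epsilon$ --- leaves exactly the hard step open: with a sharp cutoff, $T_\alpha f_\epsilon$ is no longer computable in closed form, and you must control the error in \emph{several} noncompact directions (toward $b\calU^n$, toward $\infty$ in $w'$, and in $\mathrm{Re}\,w_{n+1}$) against a kernel that is only borderline integrable there; you explicitly flag this as ``the delicate point'' without resolving it. The paper avoids the issue by replacing the sharp cutoff with the smooth family
\[
\psi_t(w)=\frac{\bfrho(w,w)^{-t}}{|\bfrho(w,\bfi)|^{2(\beta-t)}},\qquad \beta=\tfrac{n+2+\alpha}{2},\quad t\nearrow\tfrac1p ,
\]
for which $T_\alpha\psi_t$ is computed \emph{exactly} via the Cayley transform as a Gauss hypergeometric multiple of $\psi_t$ (formula \eqref{eqn:myformula}); the hypergeometric factor tends to $1$ uniformly on the sets $E_\delta=\{\bfrho(z,z)/|\bfrho(\bfi,z)|^2<\delta\}$, the contribution of $\calU^n\setminus E_\delta$ to $\|\psi_t\|_p^p$ stays bounded while $\|\psi_t\|_p^p\to\infty$, and the ratio $\|T_\alpha\psi_t\|_p/\|\psi_t\|_p$ converges to the claimed constant. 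There is also a case distinction you do not anticipate: when $(n+2+\alpha)p\le n+3$ the functions $\psi_t$ fail to lie in $L^p$, and the paper passes to the adjoint $T_\alpha^{*}$ acting on $L^q$ with a companion family $\tilde\psi_t$. Until you supply quantitative tail estimates of this kind (or an exact reproducing identity for your truncated test functions), the sharpness half of \eqref{eqn:Talphanorm} is not established.
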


Note that $\|P_{\calU^{n}}\|_{p\to p} \leq \|T_0\|_{p\to p}$, and hence the second inequality in
\eqref{eqn:bergmanestimate} follows immediately from \eqref{eqn:Talphanorm}, together with the well-known formula
\[
\Gamma(x)\Gamma(1-x)=\frac {\pi}{\sin(\pi x)}.
\]
We also remark that when $n=0$, Theorem \ref{thm:main2} gives an affirmative answer to a conjecture of Dostanic in
\cite{Dos10} (a partial answer to this conjecture was given in \cite{LZ13}).

Recall that the Berezin transform over $\calU^n$ is defined by
\begin{align*}
(B_{\calU^n} f) (z) ~:=~& \int\limits_{\calU^n}\frac {|K_{\calU^n}(z,w)|^2} {K_{\calU^n}(z,z)} f(w) dV(w)\\
=~& \frac {(n+1)!}{4\pi^{n+1}} \int\limits_{\calU^n} \frac {\bfrho(z,z)^{n+2}}
{\left|\bfrho(z,w)\right|^{2(n+2)}}
f(w) dV(w),
\end{align*}
which plays important roles in Berezin's theory of quantization as well as in
the theory of Toeplitz operators.
Note that $B_{\calU^n}=T_{n+2}^{\ast}$, the adjoint of $T_{n+2}$. Another
immediate consequence of Theorem \ref{thm:main2} is the following.

\begin{cor}\label{thm:main1}
If $1<p\leq\infty$ then
\[
\|B_{\calU^n}\|_{p\to p} = \frac {\pi} {p\sin \frac {\pi}{p}} \prod_{k=1}^{n+1} \left(1+\frac 1{kp}\right).
\]
When $p=\infty$, the quantity on the right hand side
should be interpreted as $1$.
\end{cor}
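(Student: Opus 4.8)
The plan is to exploit the stated identity $B_{\calU^n} = T_{n+2}^{\ast}$ and reduce everything to Theorem \ref{thm:main2}. Since the Banach-space adjoint of a bounded operator has the same norm as the operator itself, and the adjoint of an operator acting on $L^p(\calU^n)$ acts on $L^q(\calU^n)$ with $q = p/(p-1)$, I would first record the duality relation
\[
\|B_{\calU^n}\|_{p\to p} = \|T_{n+2}^{\ast}\|_{p\to p} = \|T_{n+2}\|_{q\to q}.
\]
As $p$ runs over $(1,\infty]$, the conjugate exponent $q$ runs over $[1,\infty)$; in particular $p=\infty$ corresponds to $q=1$, which is exactly the endpoint included in the hypothesis of Theorem \ref{thm:main2}. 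I would also check that the hypotheses of that theorem hold with $\alpha = n+2$: indeed $q(1+\alpha) = q(n+3) \geq n+3 > 1$ for every $q\geq 1$, so \eqref{eqn:Talphanorm} applies for the whole range of $p$ under consideration.

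Next I would substitute $\alpha = n+2$ and the exponent $q$ into \eqref{eqn:Talphanorm}. Using $\frac{n+2+\alpha}{2} = n+2$ and $\Gamma(n+2) = (n+1)!$, the denominator $\Gamma^2(n+2)$ cancels one factor of $(n+1)!$ from the numerator, leaving
\[
\|T_{n+2}\|_{q\to q} = \frac{\Gamma\!\left(n+3-\tfrac{1}{q}\right)\Gamma\!\left(\tfrac{1}{q}\right)}{(n+1)!}.
\]
Writing $x:=\frac{1}{p}$, so that $\frac{1}{q} = 1-x$, I would rewrite the two Gamma arguments as $n+3-\frac{1}{q} = n+2+x$ and $\frac{1}{q} = 1-x$, obtaining
\[
\|B_{\calU^n}\|_{p\to p} = \frac{\Gamma(n+2+x)\,\Gamma(1-x)}{(n+1)!}.
\]

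The final step is a bookkeeping manipulation of the Gamma factors. Iterating the functional equation gives $\Gamma(n+2+x) = \Gamma(1+x)\prod_{k=1}^{n+1}(k+x)$ and $\Gamma(1+x) = x\,\Gamma(x)$, while the reflection formula yields $\Gamma(x)\Gamma(1-x) = \pi/\sin(\pi x)$. Combining these and pulling a factor of $k$ out of each term $(k+x) = k\left(1+\tfrac{x}{k}\right)$ cancels the remaining $(n+1)! = \prod_{k=1}^{n+1}k$, producing
\[
\|B_{\calU^n}\|_{p\to p} = \frac{x\pi}{\sin(\pi x)}\prod_{k=1}^{n+1}\left(1+\frac{x}{k}\right),
\]
which is the claimed formula after resubstituting $x = 1/p$. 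For $p=\infty$ one takes the limit $x\to 0^{+}$, where $x\pi/\sin(\pi x)\to 1$ and each factor $1+x/k\to 1$, giving the value $1$.

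Since this is a corollary of Theorem \ref{thm:main2}, there is no substantial obstacle: the only points requiring a little care are the justification of the duality at the endpoint $q=1$ (equivalently $p=\infty$) and confirming that $B_{\calU^n}$ is genuinely the adjoint $T_{n+2}^{\ast}$, which follows from the reality and positivity of $\bfrho(w,w)$ on $\calU^n$ together with the symmetry $\overline{\bfrho(w,z)} = \bfrho(z,w)$, so that the kernel of $T_{n+2}^{\ast}$ is exactly that of $B_{\calU^n}$.
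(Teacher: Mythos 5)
Your proposal is correct and follows exactly the route the paper intends: the corollary is stated as an immediate consequence of Theorem \ref{thm:main2} via the identification $B_{\calU^n}=T_{n+2}^{\ast}$ and the duality $\|T_{n+2}^{\ast}\|_{p\to p}=\|T_{n+2}\|_{q\to q}$, with the Gamma-function bookkeeping (functional equation plus reflection formula) that you carry out. Your explicit treatment of the endpoint $p=\infty$ (i.e.\ $q=1$, which is covered by the hypothesis $q(1+\alpha)=n+3>1$) is exactly the verification the paper leaves to the reader.
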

This is an analogue of the main results in \cite{Dos08} and \cite{LZ12}.

Our third main result concerns the Cauchy-Szeg\"o projection over $b\calU^n$.
For $f$ holomorphic on $\calU^n$, we define
\begin{equation*}
\|f\|_{H^p} := \sup_{t>0} \Bigg\{ \int\limits_{b\calU^n} |f(u+t\bfi)|^p d\bfbeta(u) \Bigg\}^{\frac {1}{p}},
\end{equation*}
where $\bfi=(0^{\prime}, i)\in \bbC^{n+1}$ and the measure $d\bfbeta$ on $b\calU^n$ is defined by
the formula
\begin{equation*}
\int\limits_{b\calU^n} f d\bfbeta = \int\limits_{\bbC^n\times \bbR}
f(z^{\prime}, t+i|z^{\prime}|^2) dz^{\prime} dt,
\end{equation*}
for (say) continuous $f$ of compact support. See Section 2.1 below.
Then we set
\[
H^p(\calU^n) := \left\{ f: f \text{ is holomorphic on }\calU^n, \|f\|_{H^p} < \infty \right\},
\]
which is the analogue for $\calU^n$ of the classical Hardy space $H^p$ of holomorphic functions in the
upper half-plane.

The space $H^2(\calU^n)$ can be identified with the closed subspace of $L^2(b\calU^n)$
consisting of functions $\{ f^b\}$ that are boundary values of functions $f\in H^2(\calU^n)$,
so there exists an orthogonal projection from $L^2(b\calU^n)$ onto $H^2(\calU^n)$.
We denote this projection by $C_{\calU^n}$ and call it the Cauchy-Szeg\"o projection.
It may also be written as the Cauchy-Szeg\"o integral
\[
(C_{\calU^n} f) (z) = \int\limits_{b\calU^n} S_{\calU^n}(z,u) f(u) d\bfbeta(u), \qquad z\in \calU^n,
\]
where
\begin{equation}
S_{\calU^n}(z,w)= \frac {n!}{4\pi^{n+1}}\,\bfrho(z,w)^{-n-1}.
\end{equation}
See \cite[p.61, Proposition 5.3]{Gin64} or \cite[p.536, Theorem 1]{Ste93}. A classical theorem by Kor\'anyi and V\'agi
\cite[Theorem 6.1]{KV71} asserts that the Cauchy-Szeg\"o projection has an extension to a bounded operator from $L^p(b\calU^n)$ onto
$H^p(\calU^n)$, for $1<p<\infty$.

Our third main result gives a lower bound for the operator norm of $C_{\calU^n}$.

\begin{thm}\label{thm:normofCS}
For all $1<p<\infty$,
\begin{equation}\label{eqn:normofCS}
\|C_{\calU^{n}} \|_{L^p(b\calU^{n})\to H^p(\calU^n)} ~\geq~
\frac {\Gamma\left(\frac {n+1}{p}\right) \Gamma\left(\frac {n+1}{q}\right)}{
\Gamma^2 \left(\frac {n+1}{2}\right)},
\end{equation}
where $q:=\frac {p}{p-1}$ is the conjugate exponent of $p$.
\end{thm}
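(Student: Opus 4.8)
The plan is to obtain the lower bound by testing $C_{\calU^n}$ against an explicit one-parameter family of functions on $b\calU^n$ and letting the parameter approach a critical value. Recall that for $f\in H^p(\calU^n)$ the Hardy norm coincides with the $L^p(b\calU^n)$ norm of its boundary values, so that $\|C_{\calU^n}\|_{L^p(b\calU^n)\to H^p(\calU^n)}\ge\|C_{\calU^n}f\|_{H^p}/\|f\|_{L^p(b\calU^n)}$ for every admissible $f$. The natural test functions are built from the quantity $\bfrho(\bfi,\cdot)$, where $\bfi=(0',i)$: for $a,b>0$ I would set
\[
f_{a,b}(u):=\bfrho(\bfi,u)^{-a}\,\overline{\bfrho(\bfi,u)}^{-b},\qquad u\in b\calU^n .
\]
Here $\bfrho(\bfi,u)=\tfrac12(1+i\bar u_{n+1})$ is anti-holomorphic in $u$ while $\overline{\bfrho(\bfi,u)}=\tfrac12(1-iu_{n+1})$ is holomorphic in $u$, so $f_{a,b}$ is genuinely non-holomorphic whenever both exponents are positive; this is exactly what forces the Cauchy–Szeg\"o projection to act nontrivially (for a pure holomorphic power $C_{\calU^n}$ reproduces the function and the ratio is $1$). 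Since $|f_{a,b}|=|\bfrho(\bfi,\cdot)|^{-(a+b)}$, the function lies in $L^p(b\calU^n)$ precisely when $a+b>(n+1)/p$, and I would impose $a+b=(n+1)/p+\varepsilon$ with a small regularising parameter $\varepsilon>0$.

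The first computation is the denominator. Parametrising $u=(u',t+i|u'|^2)$, one has $|\bfrho(\bfi,u)|^2=\tfrac14\big[(1+|u'|^2)^2+t^2\big]$, and the integral defining $\|f_{a,b}\|_{L^p(b\calU^n)}^p$ factors into a $t$-integral and a radial $u'$-integral, each a Beta integral; this produces a closed form with a simple pole at $a+b=(n+1)/p$, so that $\|f_{a,b}\|_{L^p}\sim A\,\varepsilon^{-1/p}$ as $\varepsilon\to0^+$ with an explicit constant $A$ carrying $\Gamma$-factors of $\tfrac{n+1}{2}$. The heart of the matter is the numerator: one must evaluate
\[
(C_{\calU^n}f_{a,b})(z)=\frac{n!}{4\pi^{n+1}}\int\limits_{b\calU^n}\bfrho(z,u)^{-n-1}\,\bfrho(\bfi,u)^{-a}\,\overline{\bfrho(\bfi,u)}^{-b}\,d\bfbeta(u),
\]
a Forelli–Rudin type integral over the Heisenberg boundary in which the two anti-holomorphic factors $\bfrho(z,u)^{-n-1}$ and $\bfrho(\bfi,u)^{-a}$ meet the single holomorphic factor $\overline{\bfrho(\bfi,u)}^{-b}$. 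Using the Heisenberg translation and dilation symmetries to normalise, together with residue and Beta-integral evaluations in the variables $t$ and $|u'|^2$, this integral can be written in closed form (a hypergeometric expression in $\bfrho(z,\bfi)$), whose behaviour near the boundary controls $\|C_{\calU^n}f_{a,b}\|_{H^p}$; the latter again develops a pole of the form $B\,\varepsilon^{-1/p}$ as $\varepsilon\to0^+$.

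Taking the ratio, the divergent factors $\varepsilon^{-1/p}$ cancel and $\|C_{\calU^n}\|_{p\to p}\ge B/A$. It then remains to choose the splitting of the exponent (subject to $a+b=(n+1)/p$) and to simplify $B/A$ with the duplication and reflection formulas for $\Gamma$; the identities $B\big(\tfrac{n+1}{p},\tfrac{n+1}{q}\big)=\Gamma\big(\tfrac{n+1}{p}\big)\Gamma\big(\tfrac{n+1}{q}\big)/\Gamma(n+1)$ and $B\big(\tfrac{n+1}{2},\tfrac{n+1}{2}\big)=\Gamma^2\big(\tfrac{n+1}{2}\big)/\Gamma(n+1)$ collapse the answer to the asserted $\Gamma\big(\tfrac{n+1}{p}\big)\Gamma\big(\tfrac{n+1}{q}\big)/\Gamma^2\big(\tfrac{n+1}{2}\big)$. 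As consistency checks, the bound equals $1$ at $p=2$ (as it must for an orthogonal projection) and reduces to the classical sharp constant $1/\sin(\pi/p)$ for the Cauchy projection on the line when $n=0$. I expect the main obstacle to be the explicit evaluation of the Forelli–Rudin integral above and the extraction of its leading singular behaviour as $\varepsilon\to0^+$: because $C_{\calU^n}f_{a,b}$ is \emph{not} a pure power of $\bfrho(z,\bfi)$, one has to isolate the region (concentrating at the singularity of the kernel) responsible for the $\varepsilon^{-1/p}$ blow-up and verify that the correct split of $a,b$ makes the resulting constant precisely the Beta ratio.
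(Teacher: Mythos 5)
Your route is genuinely different from the paper's. The paper never tests $C_{\calU^n}$ against explicit boundary data: it first proves Theorem \ref{thm:twonorms}, namely $\|C_{\calU^{n}}\|_{L^p(b\calU^{n})\to H^p(\calU^n)}\geq\|P_{\calU^{n-1}}\|_{L^p(\calU^{n-1})\to A^p(\calU^{n-1})}$, by a restriction--extension argument in the first coordinate (the Forelli-type slice formula \eqref{eqn:forellispcase1} combined with subharmonicity of $|f|^p$), and then invokes the lower bound of Theorem \ref{thm:main0} one dimension down, where $(n-1)+2=n+1$ produces exactly the constant in \eqref{eqn:normofCS}. What you propose is instead the boundary analogue of the paper's own Section 5 argument for $P_{\calU^n}$: test against $f_{a,b}(u)=\bfrho(\bfi,u)^{-a}\bfrho(u,\bfi)^{-b}$ (note $\overline{\bfrho(\bfi,u)}=\bfrho(u,\bfi)$), evaluate $C_{\calU^n}f_{a,b}$ in closed hypergeometric form, and let $a+b\searrow (n+1)/p$. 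This is viable: the denominator is given by \eqref{eqn:keylem1}, and the closed form of the numerator is the boundary counterpart of \eqref{eqn:myformula2}, obtainable from \eqref{eqn:crucial3} together with \eqref{eqn:rhopsi} and \eqref{eqn:changeofv}. It would also treat $n=0$ directly, which the paper's dimension-reduction cannot.

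To turn the sketch into a proof you must settle three points. (1) The split of the exponent is not an afterthought: the analogue of the Section 5 computation gives a limiting constant whose denominator is $\Gamma(b)\Gamma(n+1-b)$, which by log-convexity of $\Gamma$ is minimized only at $b=\tfrac{n+1}{2}$; since $a+b\to\tfrac{n+1}{p}$, the required $a=\tfrac{n+1}{p}-\tfrac{n+1}{2}+\varepsilon$ is \emph{negative} for $p>2$, so your standing hypothesis $a,b>0$ must be dropped (the paper's Section 5 likewise uses a negative exponent $\epsilon-\kappa$), or you must first reduce to $1<p\leq 2$ via self-adjointness of $C_{\calU^n}$. (2) The assertion that $\|C_{\calU^n}f_{a,b}\|_{H^p}\sim B\,\varepsilon^{-1/p}$ with the correct $B$ is the whole theorem, and ``isolating the region responsible for the blow-up'' is not an argument. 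The mechanism that closes this is the one from Section 5: write $C_{\calU^n}f_{a,b}=g_\varepsilon+h_\varepsilon$, where $g_\varepsilon$ is the constant multiple of the pure power $\bfrho(z,\bfi)^{-(a+b)}$ obtained by evaluating the hypergeometric factor at $1$ via \eqref{eqn:gauss}; then $\|g_\varepsilon\|_{H^p}=|c_\varepsilon|\,\|f_{a,b}\|_{L^p(b\calU^n)}$ \emph{exactly} (the two divergent integrals coincide), while Lemma \ref{lem:est4hyperg} gains one extra power of $|\bfrho(z,\bfi)|^{-1}$ for $h_\varepsilon$ and yields $\sup_\varepsilon\|h_\varepsilon\|_{H^p}<\infty$. (3) You implicitly use that for these explicit powers the supremum over $t>0$ in the $H^p$ norm is attained as $t\to0^+$; this follows from $\bfrho(z+t\bfi,\bfi)=\bfrho(z,\bfi)+t/2$ and $\RePt\bfrho(u,\bfi)>0$, but should be recorded. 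None of these obstacles is fatal, but until the closed-form evaluation and the uniform bound on $h_\varepsilon$ are actually carried out, what you have is a program consistent with the result rather than a proof of it.
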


We shall deduce Theorem \ref{thm:normofCS} from Theorem \ref{thm:main0}, with the help of the following
inequality, which makes a connection between the norms of the two operators and might be of independent interest.

\begin{thm}\label{thm:twonorms}
For all $1<p<\infty$,
\begin{equation}\label{eqn:normcomparison}
\|C_{\calU^{n}} \|_{L^p(b\calU^{n})\to H^p(\calU^n)} ~\geq~ \|P_{\calU^{n-1}}\|_{L^p(\calU^{n-1})\to A^p(\calU^{n-1})}.
\end{equation}
\end{thm}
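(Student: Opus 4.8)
The plan is to exhibit the Bergman projection $P_{\calU^{n-1}}$ as an exact compression of the Cauchy--Szeg\"o projection $C_{\calU^n}$, using the one extra real dimension of $b\calU^n$ as a circle fibre. The starting point is that the slice $\{z_n=0\}$ of $\calU^n$, consisting of the points $(z'',0,z_{n+1})$ with $\ImPt z_{n+1}>|z''|^2$ and $z''=(z_1,\dots,z_{n-1})$, is exactly a copy of $\calU^{n-1}$, and that the two kernels carry the \emph{same} power $\bfrho^{-n-1}$ (only the constants $\frac{n!}{4\pi^{n+1}}$ and $\frac{n!}{4\pi^{n}}$ differ, by a factor $\pi$). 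Writing a boundary point of $b\calU^n$ as $(u'',u_n,s)$, standing for $(u'',u_n,s+i|u''|^2+i|u_n|^2)$ with $u''\in\bbC^{n-1}$, I would introduce the map
\[
\Psi(u'',u_n,s):=\big(u'',\,s+i|u''|^2+i|u_n|^2\big)\in\calU^{n-1},
\]
a submersion whose fibres are the circles $\{|u_n|=\text{const}\}$. A direct computation shows two things: first, for $z$ in the slice, $\bfrho(z,u)$ depends on $u$ only through $\Psi(u)$ and in fact equals $\bfrho\big((z'',z_{n+1}),\Psi(u)\big)$ computed in $\calU^{n-1}$; second, passing to polar coordinates in $u_n$ and integrating out the angle yields $\Psi_{\ast}(d\bfbeta)=\pi\,dV$.

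Next I would fix $g\in C_c(\calU^{n-1})$ (a dense class) and set $f:=g\circ\Psi$ on $b\calU^n$. Since $\Psi^{-1}(\mathrm{supp}\,g)$ is compact, $f\in C_c(b\calU^n)$, and the pushforward identity gives $\|f\|_{L^p(b\calU^n)}^p=\pi\,\|g\|_{L^p(\calU^{n-1})}^p$. Because $\Psi$ is invariant under the rotations $R_\alpha\colon(z'',z_n,z_{n+1})\mapsto(z'',e^{i\alpha}z_n,z_{n+1})$, the function $f$ is $R_\alpha$-invariant; as each $R_\alpha$ is a holomorphic automorphism of $\calU^n$ fixing $b\calU^n$, $d\bfbeta$ and the kernel $S_{\calU^n}$, the operator $C_{\calU^n}$ commutes with $R_\alpha$, so $C_{\calU^n}f$ is rotation-invariant in $z_n$. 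A rotation-invariant holomorphic function on the disc $\{|z_n|^2<\ImPt z_{n+1}-|z''|^2\}$ is constant in $z_n$, whence $C_{\calU^n}f$ is independent of $z_n$; call it $G(z'',z_{n+1})$. Evaluating on the slice and combining the kernel identity with $\Psi_{\ast}(d\bfbeta)=\pi\,dV$ collapses the Szeg\"o integral to the Bergman integral, giving $G=P_{\calU^{n-1}}g$.

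It remains to compute $\|C_{\calU^n}f\|_{H^p}$. Using that $C_{\calU^n}f=G$ is independent of $z_n$, the defining integral for the $H^p$-norm factors over $u_n\in\bbC$, and the same polar-coordinate computation turns the $u_n$-integral into a vertical integral; for each shift $\tau>0$ one obtains $\pi$ times the Bergman integral of $|G|^p$ over $\{\ImPt\eta_n>|\eta''|^2+\tau\}$, where $\eta=(\eta'',\eta_n)$ denotes the variable of $\calU^{n-1}$. This quantity is decreasing in $\tau$, so the supremum is the limit as $\tau\to0^{+}$, namely $\|C_{\calU^n}f\|_{H^p}^p=\pi\,\|G\|_{A^p(\calU^{n-1})}^p$. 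Combining the two norm computations, the factors $\pi^{1/p}$ cancel and
\[
\|C_{\calU^n}\|_{p\to p}\ \geq\ \frac{\|C_{\calU^n}f\|_{H^p}}{\|f\|_{L^p}}=\frac{\|P_{\calU^{n-1}}g\|_{A^p}}{\|g\|_{L^p}};
\]
taking the supremum over $g\in C_c(\calU^{n-1})$ yields \eqref{eqn:normcomparison}.

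The routine parts are the two polar-coordinate computations. The step deserving care is the exact collapse $C_{\calU^n}f\big|_{z_n=0}=P_{\calU^{n-1}}g$, where the identity $\Psi_{\ast}(d\bfbeta)=\pi\,dV$ and the matching of the kernels must be combined, and where working first with $g\in C_c$ legitimises the interchange of integration before passing to the general case by density. I expect the main conceptual obstacle to be recognising the fibration $\Psi$ together with the accompanying rotation symmetry in $z_n$; once these are in hand, the cancellation of all the $\pi$-factors makes the estimate lossless at the level of this test family, so no constant is wasted in \eqref{eqn:normcomparison}.
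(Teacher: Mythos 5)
Your proposal is correct, and its skeleton coincides with the paper's: the same test family (boundary functions constant along one complex direction of $z'$), the same Forelli-type pushforward identity $\Psi_{\ast}(d\bfbeta)=\pi\,dV$ (this is exactly \eqref{eqn:prjformula2}), and the same kernel matching $S_{\calU^n}\big|_{\text{slice}}=\pi^{-1}K_{\calU^{n-1}}$, leading to $C_{\calU^n}f\big|_{\text{slice}}=P_{\calU^{n-1}}g$ with all factors of $\pi$ cancelling. Where you genuinely deviate is in how $\|C_{\calU^n}f\|_{H^p}$ is related to $\|P_{\calU^{n-1}}g\|_{A^p}$: the paper proves a restriction lemma $\|\mathfrak{R}h\|_{A^p(\calU^{n-1})}\leq \pi^{-1/p}\|h\|_{H^p(\calU^n)}$ valid for \emph{every} $h\in H^p(\calU^n)$, using the sub-mean-value property of the plurisubharmonic function $|h|^p$ on the disc fibres, and then applies it to $h=C_{\calU^n}g$. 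You instead exploit the rotation symmetry $R_\alpha$ in the $z_n$-variable: since $f$ is $R_\alpha$-invariant and $C_{\calU^n}$ commutes with $R_\alpha$, the holomorphic function $C_{\calU^n}f$ is constant in $z_n$, hence is exactly the cylindrical extension of $P_{\calU^{n-1}}g$, and its $H^p$-norm is computed as an identity rather than bounded via an inequality. Your route is slightly more economical for this particular theorem (it bypasses the paper's plurisubharmonicity lemma entirely), at the cost of applying only to rotation-invariant data; the paper's restriction lemma is more general and is presented there as being of independent interest. All the steps you flag as needing care (absolute convergence for $g\in C_c$, the polar-coordinate pushforward, the collapse of the Szeg\H{o} integral) do go through as you describe.
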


The above results suggest the following.
\begin{conj}\label{conj:ourconj}
For all $1<p<\infty$, we have
\begin{align}
\|P_{\calU^{n}} \|_{L^p(\calU^{n})\to A^p(\calU^n)} ~=~&
\frac {\Gamma\left(\frac {n+2}{p}\right) \Gamma\left(\frac {n+2}{q}\right)}{
\Gamma^2 \left(\frac {n+2}{2}\right)} \label{eqn:conj1}
\intertext{and}
\|C_{\calU^{n}} \|_{L^p(b\calU^{n})\to H^p(\calU^n)} ~=~&
\frac {\Gamma\left(\frac {n+1}{p}\right) \Gamma\left(\frac {n+1}{q}\right)}{
\Gamma^2 \left(\frac {n+1}{2}\right)}, \label{eqn:conj2}
\end{align}
where $q:=\frac {p}{p-1}$ is the conjugate exponent of $p$.
\end{conj}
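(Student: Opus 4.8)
The plan is to observe first that each conjectured identity is already half-established: Theorem \ref{thm:main0} supplies the lower bound $\|P_{\calU^n}\|_{p\to p}\ge \Gamma(\frac{n+2}{p})\Gamma(\frac{n+2}{q})/\Gamma^2(\frac{n+2}{2})$, and Theorem \ref{thm:normofCS} supplies the matching lower bound for $C_{\calU^n}$, so that \eqref{eqn:conj1}--\eqref{eqn:conj2} are \emph{equivalent to the two reverse (upper) inequalities}. Moreover the two conjectured constants are linked: replacing $n$ by $n-1$ in the right-hand side of \eqref{eqn:conj1} produces precisely the right-hand side of \eqref{eqn:conj2}, so the conjecture predicts $\|C_{\calU^n}\|_{p\to p}=\|P_{\calU^{n-1}}\|_{p\to p}$, in perfect agreement with the lower bound of Theorem \ref{thm:twonorms}. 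I would therefore concentrate on the single statement \eqref{eqn:conj1} and recover \eqref{eqn:conj2} by establishing the reverse comparison $\|C_{\calU^n}\|_{p\to p}\le \|P_{\calU^{n-1}}\|_{p\to p}$ to complement Theorem \ref{thm:twonorms}. Notably, the base case $n=0$ of \eqref{eqn:conj2} reads $\|C_{\calU^0}\|_{p\to p}=1/\sin(\pi/p)$, which is exactly Hollenbeck and Verbitsky's sharp norm for the analytic projection on the line; this both anchors the induction and signals that their oscillatory method is the right template.

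It is essential to note why the estimates already in hand cannot close the gap. The pointwise domination $\|P_{\calU^n}\|_{p\to p}\le\|T_0\|_{p\to p}$ combined with \eqref{eqn:Talphanorm} yields only the constant $(n+1)!\,\Gamma(\frac1p)\Gamma(\frac1q)/\Gamma^2(\frac{n+2}{2})$, which strictly exceeds the conjectured value for every $p\ne 2$; in fact it already overshoots at $p=2$, where $\|P_{\calU^n}\|_{2\to2}=1$ whereas $\|T_0\|_{2\to2}>1$. Thus any proof of the upper bound in \eqref{eqn:conj1} must exploit the cancellation carried by the phase of the complex kernel $\bfrho(z,w)^{-n-2}$, and not merely its modulus. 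This places the problem in the same circle of ideas as the determination of the sharp $L^p$ norm of the analytic (Riesz) projection, where the exact constant $1/\sin(\pi/p)$ lies strictly below every bound obtainable by majorizing the modulus of the kernel and can be reached only through a genuinely oscillatory argument.

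The mechanism I would use to expose this cancellation is the automorphism group that commutes with $P_{\calU^n}$: the Heisenberg translations of $b\calU^n$ and the non-isotropic dilations $\delta_r:(z',z_{n+1})\mapsto(rz',r^2z_{n+1})$. After a partial Fourier transform along the center of the Heisenberg group (equivalently, in the Paley--Wiener realization of $A^2(\calU^n)$), these symmetries reduce $P_{\calU^n}$ to a one-parameter family of Mellin-convolution operators in the height variable $t=\ImPt z_{n+1}-|z'|^2$, each with a complex kernel homogeneous of degree $-1$. This is precisely the reduction that underlies the exact evaluation \eqref{eqn:Talphanorm}, and I would reuse that machinery essentially verbatim, the only difference being that the reduced kernel is now complex rather than positive. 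The sharp $L^p$-norm of such a Mellin-convolution operator is governed by its symbol, the Mellin transform of the kernel along the line $\RePt s=\frac1p$; evaluating it through the Beta integral $\int_0^\infty u^{(n+2)/p-1}(1+u)^{-(n+2)}\,du=\Gamma(\frac{n+2}{p})\Gamma(\frac{n+2}{q})/\Gamma(n+2)$ and restoring the normalizing factor $\Gamma(n+2)/\Gamma^2(\frac{n+2}{2})=(n+1)!/\Gamma^2(\frac{n+2}{2})$ reproduces exactly the conjectured constant, while at $p=2$ the supremum of the symbol is $1$, consistent with the projection property.

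The hard part — and the reason the statement is only conjectural — is the final passage from the size of the symbol on the critical line to the actual $L^p$ operator norm. For $p\ne 2$ the $L^p$ norm of a Mellin multiplier lies strictly between its $L^2$ value (the supremum of the symbol) and the $L^1$ norm of its kernel (the lossy $T_0$ bound), and identifying it with the former, for this specific Gamma-ratio symbol, is a sharp-constant problem of exactly Hollenbeck--Verbitsky type. The approach I would attempt is to construct a subharmonic (Bellman-type) majorant $\Phi$ on $\bbC\times\bbC$ satisfying $\Phi(F,f)\ge|F|^p-C^p|f|^p$, where $F=P_{\calU^n}f$ and $C$ is the conjectured constant, designed so that the holomorphy of $F$ forces $\int_{\calU^n}\Phi(F,f)\,dV\le 0$ after integrating against the dilation-invariant structure of the domain, thereby yielding $\|P_{\calU^n}f\|_p\le C\|f\|_p$. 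Producing such a majorant whose constant matches the Gamma-ratio for the Siegel kernel — rather than the weaker $\frac{\pi}{\sin(\pi/p)}$-type constant — is the decisive and, at present, open obstacle.
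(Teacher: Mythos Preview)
The statement you are addressing is a \emph{Conjecture} in the paper, not a theorem; the paper offers no proof of either \eqref{eqn:conj1} or \eqref{eqn:conj2}. Your proposal correctly recognizes this: the lower bounds are supplied by Theorems \ref{thm:main0} and \ref{thm:normofCS}, the reverse (upper) inequalities are the open content, and the base case $n=0$ of \eqref{eqn:conj2} is the Hollenbeck--Verbitsky theorem --- exactly as the paper itself observes after stating the conjecture. Your remark that the conjecture would force $\|C_{\calU^n}\|=\|P_{\calU^{n-1}}\|$, saturating the inequality of Theorem \ref{thm:twonorms}, is also consistent with the paper's comment that \eqref{eqn:conj2} implies \eqref{eqn:conj1}.

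Since the paper contains no proof, there is nothing to compare your approach against. Your outline --- symmetry reduction to a Mellin-convolution problem, followed by a Bellman/subharmonic-majorant argument in the spirit of Hollenbeck--Verbitsky --- is a plausible line of attack, and you are forthright that the decisive step (constructing a majorant whose constant matches the Gamma ratio rather than the cruder $\pi/\sin(\pi/p)$ arising from $\|T_0\|$) remains open. That is an accurate assessment of the status of the problem; the conjecture is genuinely unresolved in the paper, and your proposal is a research programme rather than a proof.
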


Note that when $n=0$, the conjectured \eqref{eqn:conj2} reads
\begin{equation}\label{eqn:GKconj}
\|C_{\bbC_{+}} \|_{L^p(\bbR)\to H^p(\bbC_{+})} =
\frac {1}{\sin \frac {\pi}{p}}.
\end{equation}
This coincides with a variant of the Gohberg-Krupnik conjecture, which was proved
by Hollenbeck and Verbitsky \cite{HV00} only in 2000.
For the proof of \eqref{eqn:GKconj}, see \cite[p.373]{Kru10}.
This provides a support for our conjecture.
It is also noteworthy that the conjectured \eqref{eqn:conj2} would imply \eqref{eqn:conj1},
in view of Theorems \ref{thm:main0} and \ref{thm:twonorms}.
See also \cite{Liu15} and \cite{Liu16} for their counterparts in the setting of the unit ball.

The rest of the paper is organized as follows: In Section 2  we review some definitions and basic facts, and
in Section 3 we establish several technical lemmas, some of them might be of independent interest.
Sections 4 is devoted to the proof of Theorem \ref{thm:main2}.
Our first main result, Theorem \ref{thm:main0} will be proved in Sections 5.
Sections 6 is devoted to the proof of Theorem \ref{thm:twonorms}. 

\subsection*{Acknowledgements}

The author is indebted to Lifang Zhou for correcting two errors in an earlier version of the paper, and to
Guangbin Ren and Xieping Wang for many helpful comments.

\section{Preliminaries}

\subsection{Heisenberg group}

We recall the definition of the Heisenberg group and some basic facts which can be found in \cite[Chapter XII]{Ste93}
and \cite[Chapter 9]{Kra09}.

We denote by $\bbH^n$ the Heisenberg group, that is, the set
\[
\bbC^n \times \bbR = \{ [\zeta,t]: \zeta\in \bbC^n, t\in \bbR\}
\]
endowed with the Heisenberg group operation
\[
[\zeta,t]\cdot [\eta,s]=[\zeta+\eta, t+s+2 \mathrm{Im}(\zeta\cdot \bar{\eta})].
\]
Here we shall use square brackets [\, ] for elements of the Heisenberg group to distinguish them from
points in $\bbC^{n+1}$, for which parentheses (\,) are used. The identity element is $[0,0]$ and the inverse of
$[\zeta,t]$ is $[\zeta,t]^{-1}=[-\zeta,-t]$.
The Haar measure $dh$ on $\bbH^n$ is the usual Lebesgue measure $d\zeta dt$ on $\bbC^n\times \bbR$, here
we write $h=[\zeta,t],\, \zeta\in \bbC^n, t\in \bbR$. (To be more precise, $d\zeta = d\eta d\xi$ if $\zeta = \eta + i\xi$ with
$\eta,\xi\in \bbR^n$.)

To each element $h=[\zeta,t]$ of $\bbH^n$, we associate the following (holomorphic) affine self-mapping of
$\calU^n$:
\begin{equation}\label{eqn:groupaction}
h:\; (z^{\prime},z_{n+1}) \longmapsto (z^{\prime}+\zeta, z_{n+1}+t+ 2i z^{\prime}\cdot \bar{\zeta} + i|\zeta|^2).
\end{equation}
The mappings \eqref{eqn:groupaction} are simply transitive on the boundary $b\calU^n$ of $\calU^n$,
so we can identify the Heisenberg group with $b\calU^n$ via its action on the origin
\[
\bbH^n \ni [\zeta,t] \,\longmapsto\, (\zeta, t+i|\zeta|^2) \in b\calU^n.
\]
This identification allows us to transport the Haar measure $dh$ on $\bbH^n$ to a measure $d\bfbeta$ on $b\calU^n$;
that is, we have the integration formula
\begin{equation}\label{eqn:defnofbeta}
\int\limits_{b\calU^n} f d\bfbeta = \int\limits_{\bbC^n\times \bbR}
f(z^{\prime}, t+i|z^{\prime}|^2) dz^{\prime} dt,
\end{equation}
for (say) continuous $f$ of compact support. The measure $d\bfbeta$ is invariant under the action of $\bbH^n$,
that is, $d\bfbeta(h(z))=d\bfbeta(z)$ for each $h\in \bbH^n$.

We shall make frequent use of the following Fubini's theorem on $\calU^n$:
\begin{equation}\label{eqn:fubini}
\int\limits_{\calU^n} f dV = \int\limits_{0}^{\infty} \int\limits_{b\calU^n} f(u+t\bfi) d\bfbeta(u) dt,
\end{equation}
which is valid for, say, continuous $f$ of compact support. This can be easily verified by substituting
\eqref{eqn:defnofbeta} into the right hand side of \eqref{eqn:fubini}

Finally, it is easy to verify that
\begin{equation}\label{eqn:h-invariance4rho}
\bfrho(h(z),h(w))=\bfrho(z,w),
\end{equation}
for each $h\in \bbH^n$, and for all $z\in \calU^n$, $w\in b\calU^n$.

\subsection{Cayley transform}
Recall that the Cayley transform $\Phi:\bbB_{n+1}\to \calU^n$ is given by
\[
(z^{\prime}, z_{n+1})\; \longmapsto\; \left( \frac {z^{\prime}}{1+z_{n+1}},
i\left(\frac {1-z_{n+1}}{1+z_{n+1}}\right) \right).
\]
It is immediate to calculate that
\[
\Psi:=\Phi^{-1}: \left(z^{\prime},z_{n+1}\right)\;\longmapsto\; \left(\frac {2iz^{\prime}}{i+z_{n+1}},
\frac {i-z_{n+1}}{i+z_{n+1}}\right).
\]

Again, we refer to \cite[Chapter XII]{Ste93} and \cite[Chapter 9]{Kra09} for the properties
of these two mappings. For the convenience of later reference, we record the following lemma.
\begin{lem}
The mappings $\Phi$ and $\Psi$ have the following elementary properties:
\begin{enumerate}
\item[(i)]
The identity
\begin{equation}\label{eqn:identity14phi}
\bfrho(\Phi(\xi),\Phi(\eta)) = \frac {1-\xi\cdot \overline{\eta}} {(1+\xi_{n+1}) (1+\overline{\eta}_{n+1})}
\end{equation}
holds for all $\xi,\eta\in \ball$.
\item[(ii)]
The real Jacobian of $\Phi$ at $\xi\in \ball$ is
\begin{equation}\label{eqn:jacobian4phi}
\left(J_{R}\Phi\right)(\xi) = \frac {4}{|1+\xi_{n+1}|^{2(n+2)}}.
\end{equation}
\item[(iii)]
The identity
\begin{equation}\label{eqn:cayleyidentity1}
1- \Psi(z)\cdot \overline{\Psi(w)} = \frac {\bfrho(z,w)} {\bfrho(z,\bfi)\bfrho(\bfi,w)}
\end{equation}
holds for all $z,w\in \calU^{n}$, where $\bfi=(0^{\prime},i)$.
\item[(iv)]
The identity
\begin{equation}\label{eqn:cayleyidentity2}
|\Psi(z)|^2 = 1- \frac {\bfrho(z,z)} {|\bfrho(z,\bfi)|^2}
\end{equation}
holds for all $z\in \calU^{n}$.
\item[(v)]
The real Jacobian of $\Psi$ at $z\in \calU^n$ is
\begin{equation}\label{eqn:jacobian}
\left(J_{R}\Psi\right)(z) = \frac {1}{4|\bfrho(z,\bfi)|^{2(n+2)}}.
\end{equation}
\end{enumerate}

\end{lem}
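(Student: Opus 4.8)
The plan is to establish (i) and (ii) by direct computation and then to deduce (iii), (iv) and (v) from them via the change of variables $\xi=\Psi(z)$, $\eta=\Psi(w)$. For (i), I would substitute the coordinates of $\Phi(\xi)$ and $\Phi(\eta)$ into the definition of $\bfrho$. The term $-\Phi(\xi)'\cdot\overline{\Phi(\eta)'}$ equals $-\xi'\cdot\overline{\eta'}\big/[(1+\xi_{n+1})(1+\overline\eta_{n+1})]$ at once, while $\tfrac i2\bigl(\overline{\Phi(\eta)_{n+1}}-\Phi(\xi)_{n+1}\bigr)$, after clearing the common denominator $(1+\xi_{n+1})(1+\overline\eta_{n+1})$, becomes $\tfrac12$ of the numerator $(1-\overline\eta_{n+1})(1+\xi_{n+1})+(1-\xi_{n+1})(1+\overline\eta_{n+1})$. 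The only step of substance is that this numerator collapses to $2(1-\xi_{n+1}\overline\eta_{n+1})$; adding the two terms and using $\xi\cdot\overline\eta=\xi'\cdot\overline{\eta'}+\xi_{n+1}\overline\eta_{n+1}$ then gives the asserted identity.

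For (ii), I would use that $\Phi$ is holomorphic, so that its real Jacobian equals $|\det_{\bbC}D\Phi|^2$. The partial derivatives $\partial\Phi_i/\partial\xi_j$ show that $D\Phi$ is block upper triangular, with upper-left block $(1+\xi_{n+1})^{-1}I_n$ and lower-right entry $-2i(1+\xi_{n+1})^{-2}$; hence $\det_{\bbC}D\Phi=-2i\,(1+\xi_{n+1})^{-(n+2)}$ and $|\det_{\bbC}D\Phi|^2=4\,|1+\xi_{n+1}|^{-2(n+2)}$. Before turning to the remaining parts I would record two elementary identities. From $\Psi(z)_{n+1}=(i-z_{n+1})/(i+z_{n+1})$ and the definition of $\bfrho$ one checks $i+z_{n+1}=2i\,\bfrho(z,\bfi)$, whence $1+\Psi(z)_{n+1}=2i/(i+z_{n+1})=1/\bfrho(z,\bfi)$; conjugating (and using $\bfi'=0'$) yields $1+\overline{\Psi(w)_{n+1}}=1/\bfrho(\bfi,w)$, while directly $\bfrho(\bfi,z)=\overline{\bfrho(z,\bfi)}$.

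With these identities, (iii), (iv) and (v) follow quickly. For (iii) I put $\xi=\Psi(z)$ and $\eta=\Psi(w)$ in (i), so that $\Phi(\xi)=z$ and $\Phi(\eta)=w$, and solve for $1-\Psi(z)\cdot\overline{\Psi(w)}$, replacing the factors $1+\Psi(z)_{n+1}$ and $1+\overline{\Psi(w)_{n+1}}$ by $1/\bfrho(z,\bfi)$ and $1/\bfrho(\bfi,w)$. Specializing $w=z$ in (iii) and using $\bfrho(z,\bfi)\bfrho(\bfi,z)=|\bfrho(z,\bfi)|^2$ then gives (iv). Finally, (v) follows from the inverse-function relation $(J_R\Psi)(z)=1/(J_R\Phi)(\Psi(z))$: inserting (ii) at the point $\Psi(z)$ and using $|1+\Psi(z)_{n+1}|^{2}=|\bfrho(z,\bfi)|^{-2}$ produces $(J_R\Phi)(\Psi(z))=4\,|\bfrho(z,\bfi)|^{2(n+2)}$, and hence the claimed reciprocal.

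I do not anticipate a genuine obstacle; these are all elementary manipulations. The one point demanding care is that $\bfrho$ is not symmetric in its two arguments, so one must consistently use $\bfrho(\bfi,z)=\overline{\bfrho(z,\bfi)}$ (rather than $\bfrho(z,\bfi)$ itself) and keep careful track of complex conjugates when passing from (i) to (iii) and from (iii) to (iv).
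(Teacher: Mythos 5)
Your proposal is correct: the computations in (i) and (ii) check out (in particular the collapse of the numerator to $2(1-\xi_{n+1}\overline{\eta}_{n+1})$ and the triangular structure of $D\Phi$ giving $\det_{\bbC}D\Phi=-2i(1+\xi_{n+1})^{-(n+2)}$), the key identities $1+[\Psi(z)]_{n+1}=1/\bfrho(z,\bfi)$ and $\bfrho(\bfi,z)=\overline{\bfrho(z,\bfi)}$ are right, and (iii)--(v) do follow from them exactly as you describe. The paper itself offers no proof of this lemma, merely citing Stein and Krantz, so there is nothing to compare against; your direct verification is a complete and appropriate argument.
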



Note that the mappings $\Phi$ and $\Psi$ extend also to the boundaries of the domains $\ball$ and $\calU^n$.
Thus, $b\calU^n$ corresponds via $\Psi$ to the unit sphere $\sphere$, except for the ``south pole''
$(0^{\prime},-1)$.
It is easy to check that the identity
\begin{equation}\label{eqn:rhopsi}
\bfrho(z,w)=\frac {1- \Psi(z)\cdot\overline{\Psi(w)} } {(1+[\Psi(z)]_{n+1})
(1+ \overline{[\Psi(w)]}_{n+1})}
\end{equation}
holds for all $z\in \calU^{n}$ and $w\in b\calU^n$.

Finally, writing $d\sigma$ for the normalized surface measure on the unit sphere $\sphere$, one has
the following change of variables formula (see \cite[p.575, 7.2(b)]{Ste93}):
\begin{equation}\label{eqn:changeofv}
\int\limits_{b\calU^n} f d\bfbeta ~=~ \frac {4\pi^{n+1}} {n!} \int\limits_{\sphere}
\frac {f(\Phi(\xi))} {|1+\xi_{n+1}|^{2(n+1)}} d\sigma(\xi).
\end{equation}

\subsection{M\"obius transformations}

The group of all one-to-one holomorphic mappings of $\ball$ onto $\ball$ (the so-called automorphisms
of $\ball$) will be denoted by $\mathrm{Aut}(\ball)$. It is generated by the unitary transformations
on $\bbC^{n+1}$ along with the M\"obius transformations
$\varphi_{\xi}$ given by
\[
\varphi_{\xi}(\eta) := \frac {\xi-P_{\xi}\eta-(1-|\xi|^2)^{\frac {1}{2}}Q_{\xi}\eta}{1-\eta\cdot\overline{\xi}},
\]
where $\xi\in \ball$, $P_{\xi}$ is the orthogonal projection onto the space spanned
by $\xi$,
and $Q_{\xi}\eta=\eta- P_{\xi}\eta$.

It is easily shown that the mapping $\varphi_{\xi}$ satisfies
\[
\varphi_{\xi}(0)=\xi, \quad \varphi_{\xi}(\xi)=0, \quad \varphi_{\xi}(\varphi_{\xi}(\eta))=\eta.
\]
Furthermore, for all $\eta, \omega\in \overline{\ball}$,
\begin{align}
1- \varphi_{\xi}(\eta)\cdot \overline{\varphi_{\xi}(\omega)} ~=~& \frac {(1- |\xi|^2)
(1- \eta\cdot \overline{\omega})} {(1-\eta\cdot\overline{\xi})(1- \xi\cdot \overline{\omega})}, \label{eqn:moeb0}
\intertext{and in particular,}
1- \varphi_{\xi}(\eta) \cdot \overline{\xi} ~=~& \frac {1-|\xi|^2} {1- \eta \cdot \overline{\xi}}.\label{eqn:moeb1}
\end{align}
Finally, an easy computation shows that
\begin{equation}\label{eqn:moeb2}
1- \varphi_{\xi}(\eta)\cdot \overline{\omega} ~=~ \frac {(1- \eta \cdot \overline{\varphi_{\xi}(\omega)})
(1- \xi\cdot \overline{\omega})} {1- \eta\cdot \overline{\xi}}
\end{equation}
holds for all $\eta,\omega\in \overline{\ball}$.

The best general reference here is \cite[Chapter 2]{Rud80}.

\subsection{Hypergeometric functions}

We use the classical notation 
\begin{equation*}\label{eq:hypergdefin}
\hyperg{a}{b}{c}{\lambda}:=\sum_{k=0}^{\infty}\frac{(a)_k(b)_k}{(c)_k}\frac{\lambda^k}{k!}
\end{equation*}
with $c\neq 0, -1,-2,\ldots$, where
\[
(a)_0=1,\quad(a)_k=a(a+1)\ldots(a+k-1)
\quad \text{ for } k\geq1.
\]
denotes the Pochhammer symbol of $a$. This series gives an analytic function for $|\lambda|<1$,
called the Gauss hypergeometric function associated to $(a,b,c)$.

We refer to \cite[Chapter 2]{AAR99} for the properties of these
functions. Here, we only record three formulas for later reference.
\begin{align}
\hyperg{a}{b}{c}{1} ~=~& \frac {\Gamma(c) \Gamma(c-a-b)}
{\Gamma(c-a) \Gamma(c-b)},\qquad \RePt(c-a-b)>0.
\label{eqn:gauss}\\
\hyperg{a}{b}{c}{\lambda} ~=~& (1-\lambda)^{c-a-b} \hyperg{c-a}{c-b}{c}{\lambda}. \label{eqn:euler}\\
\hyperg{a}{b}{c}{\lambda} ~=~& \frac{\Gamma(c)}{\Gamma(b)\Gamma(c-b)}\int_0^1
t^{b-1}(1-t)^{c-b-1} (1-t\lambda)^{-a} dt,\notag\\
&\hspace{64pt} \RePt c>\RePt b>0;\;  |\arg(1-\lambda)|<\pi. \label{eqn:euler2}
\end{align}



\subsection{Schur's test}
The following lemma, usually called Schur's test, is one of the most commonly used
results for proving the $L^p$-boundedness of integral
operators. See, for example, \cite{Zhu07}.

\begin{lem}\label{lem:schurtest}
Suppose that $(X,\mu)$ is a $\sigma$-finite measure space and
$Q(x,y)$ is a nonnegative measurable function on $X\times X$ and $T$
is the associated integral operator
\[
Tf(x)=\int_X Q(x,y) f(y) d\mu(y).
\]
Let $1<p<\infty$ and $\frac {1}{p}+\frac {1}{q}=1$. If there exist a positive constant
$C$ and a positive measurable function $g$ on $X$ such that
\[
\int_X Q(x,y) g(y)^{q} d\mu(y) \leq  C g(x)^{q}
\]
for almost every $x$ in $X$ and
\[
\int_X Q(x,y) g(x)^{p} d\mu(x) \leq  C g(y)^{p}
\]
for almost every $y$ in $X$, then $T$ is bounded on $L^p(X,\mu)$ with
$\|T\|\leq C$.
\end{lem}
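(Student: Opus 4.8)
The plan is to run the classical weighted-Hölder argument, using the auxiliary weight $g$ to split the kernel so that the two hypotheses can be applied in sequence. Since $Q\geq 0$, we have $|Tf(x)|\leq \int_X Q(x,y)|f(y)|\,d\mu(y)$, so it suffices to bound the right-hand side; equivalently, we may assume throughout that $f\geq 0$. All integrals that then appear have nonnegative integrands, so Tonelli's theorem will legitimize every interchange of integration order without any a priori integrability assumption.

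First I would factor the integrand of $Tf(x)=\int_X Q(x,y)f(y)\,d\mu(y)$ as
\[
Q(x,y)f(y) = \Big[Q(x,y)^{1/q}\,g(y)\Big]\cdot\Big[Q(x,y)^{1/p}\,g(y)^{-1}f(y)\Big]
\]
and apply Hölder's inequality with exponents $q$ and $p$. Raising the first factor to the power $q$ and integrating is controlled by the first hypothesis, $\int_X Q(x,y)g(y)^q\,d\mu(y)\leq C g(x)^q$, so that factor contributes exactly $C^{1/q}g(x)$. This yields the pointwise estimate
\[
Tf(x) \leq C^{1/q}\,g(x)\,\Big(\int_X Q(x,y)\,g(y)^{-p}f(y)^p\,d\mu(y)\Big)^{1/p}.
\]

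Next I would raise this inequality to the $p$-th power, integrate in $x$, and interchange the order of integration by Tonelli. This bounds $\|Tf\|_p^p$ by
\[
C^{p/q}\int_X g(y)^{-p}f(y)^p\,\Big(\int_X Q(x,y)\,g(x)^p\,d\mu(x)\Big)\,d\mu(y),
\]
at which stage the second hypothesis $\int_X Q(x,y)g(x)^p\,d\mu(x)\leq C g(y)^p$ cancels the factor $g(y)^{-p}$ and produces $C^{\,p/q+1}\int_X f(y)^p\,d\mu(y)$. Since $\frac1p+\frac1q=1$ forces $\frac{p}{q}+1=p$, this is precisely $C^p\|f\|_p^p$, whence $\|Tf\|_p\leq C\|f\|_p$ and therefore $\|T\|\leq C$.

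The argument is essentially bookkeeping, and I do not expect any genuine obstacle. The only points deserving a word of care are the reduction to nonnegative $f$, the exponent identity $\frac{p}{q}+1=p$, and the remark that the nonnegativity of all integrands is what allows Tonelli to replace Fubini, so that the interchange of integration order requires no separate justification.
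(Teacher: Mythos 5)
Your proof is correct and is exactly the standard argument for Schur's test (Hölder with the weight $g$, then Tonelli and the two kernel hypotheses); the paper itself does not prove this lemma but cites Zhu's \emph{Operator theory in function spaces}, where the same proof appears. Nothing to add: the reduction to $f\geq 0$, the exponent identity $\frac{p}{q}+1=p$, and the use of Tonelli are precisely the points that need to be (and are) handled.
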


\section{Technical lemmas}

\subsection{An elementary inequality}

\begin{lem}\label{lem:est4hyperg}
Let $a, b\in \bbR$ and $c>\max\{a+b+1,a+1,b\}$. Then
\begin{equation}\label{eqn:est4hyperg1}
\left|\hyperg {a}{b}{c}{\lambda} - \hyperg {a}{b}{c}{1}\right| ~\leq~
C(a,b,c)\cdot |1-\lambda|
\end{equation}
for all $\lambda\in \disk$, where
\[
C(a,b,c):=|a| \cdot \max\left\{2^{-a-1}, \frac {\Gamma(c)\Gamma(c-a-b-1)} {\Gamma(c-a-1)\Gamma(c-b)} \right\}.
\]
\end{lem}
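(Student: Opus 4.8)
The plan is to route everything through Euler's integral representation \eqref{eqn:euler2}. Writing $g(\lambda):=\hyperg{a}{b}{c}{\lambda}$ and working in the operative range $\RePt b>0$ (for which \eqref{eqn:euler2} applies, $c>b$ being guaranteed by hypothesis), I would first record that $g(1)$ is finite since $c-a-b>0$, and then subtract the representations at $\lambda$ and at $1$ to get
\[
g(\lambda)-g(1)=\frac{\Gamma(c)}{\Gamma(b)\Gamma(c-b)}\int_0^1 t^{b-1}(1-t)^{c-b-1}\Big[(1-t\lambda)^{-a}-(1-t)^{-a}\Big]\,dt.
\]
This reduces the whole lemma to estimating the inner difference $(1-t\lambda)^{-a}-(1-t)^{-a}$ and integrating it against the Beta density $t^{b-1}(1-t)^{c-b-1}$.

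To estimate the inner difference I would set $\phi(s):=(1-s)^{-a}$ and write it as a line integral of $\phi'(s)=a(1-s)^{-a-1}$ along the straight segment from $t$ to $t\lambda$, parametrized by $s=t\big((1-\tau)+\tau\lambda\big)$, $\tau\in[0,1]$. This yields
\[
(1-t\lambda)^{-a}-(1-t)^{-a}=a\,t(\lambda-1)\int_0^1(1-s)^{-a-1}\,d\tau,
\]
so that $\big|(1-t\lambda)^{-a}-(1-t)^{-a}\big|\le |a|\,t\,|1-\lambda|\int_0^1|1-s|^{-a-1}\,d\tau$. The geometric input is that every such $s$ satisfies $|s|\le t$ and $\RePt(1-s)\ge 0$, whence $1-t\le |1-s|\le 2$; this keeps the principal branch of $(1-s)^{-a}$ well defined along the segment (the singularity $s=1$ is approached only at the endpoint $t=1$) and separates the two regimes below.

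Now I would split on the sign of $a+1$. If $a+1\le 0$, then $-a-1\ge 0$ and I bound $|1-s|^{-a-1}\le 2^{-a-1}$ uniformly; the surviving integral is $\int_0^1 t^{b}(1-t)^{c-b-1}\,dt=B(b+1,c-b)$, and combining with the Gamma prefactor produces the clean factor $b/c$, so $|g(\lambda)-g(1)|\le |a|\,\tfrac{b}{c}\,2^{-a-1}|1-\lambda|\le |a|\,2^{-a-1}|1-\lambda|$ because $b<c$. If instead $a+1>0$, then $-a-1<0$ and, using $|1-s|\ge 1-t$, I bound $|1-s|^{-a-1}\le (1-t)^{-a-1}$; the surviving integral is $\int_0^1 t^{b}(1-t)^{c-a-b-2}\,dt=B(b+1,c-a-b-1)$, which converges \emph{precisely} because $c>a+b+1$. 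After simplification the prefactor becomes $\tfrac{b}{c-a-1}\,\hyperg{a+1}{b}{c}{1}$, where I have used Gauss's theorem \eqref{eqn:gauss} to identify $\frac{\Gamma(c)\Gamma(c-a-b-1)}{\Gamma(c-a-1)\Gamma(c-b)}=\hyperg{a+1}{b}{c}{1}$; since $c-a-1>b$ this is at most $\hyperg{a+1}{b}{c}{1}$. Taking the larger of the two bounds gives exactly $C(a,b,c)\,|1-\lambda|$, and the maximum in $C(a,b,c)$ is nothing more than the packaging of the two cases $a\le -1$ and $a>-1$.

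I expect the delicate point to be the inner estimate rather than the bookkeeping: one must justify the line-integral representation of $(1-t\lambda)^{-a}-(1-t)^{-a}$ together with a single consistent branch of $(1-s)^{-a}$, which rests entirely on the segment from $t$ to $t\lambda$ lying in the closed right half-plane $\{\RePt(1-s)\ge 0\}$ and avoiding $s=1$. The hypothesis $c>a+b+1$ enters in two essential ways — it is the exact threshold making $B(b+1,c-a-b-1)$ finite in the case $a+1>0$, and it forces the slack factor $\tfrac{b}{c-a-1}$ to be $<1$ — while the weaker conditions $c>a+1$ and $c>b$ keep the relevant Gamma factors ($\Gamma(c-a-1)$ and $\Gamma(c-b)$) positive and finite.
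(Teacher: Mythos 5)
Your proof is correct and takes essentially the same route as the paper: subtract the two Euler integral representations \eqref{eqn:euler2} at $\lambda$ and at $1$, and bound the inner difference $(1-t\lambda)^{-a}-(1-t)^{-a}$ by $|a|\,|1-\lambda|\max\{2^{-a-1},(1-t)^{-a-1}\}$ using $1-t\le|1-s|\le 2$ on the relevant segment. Your line-integral justification of this step is in fact slightly more careful than the paper's bare appeal to the mean value theorem (which needs the integral form for complex-valued functions), and your explicit two-case Beta computation yields a marginally sharper constant that you then relax to $C(a,b,c)$; the restriction $b>0$ needed for \eqref{eqn:euler2} is equally implicit in the paper's own argument and is satisfied where the lemma is applied.
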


\begin{proof}
Since $c>b$ and $c-a-b>0$, by \eqref{eqn:euler2}, we have
\begin{align}
& \left|\hyperg {a}{b}{c}{\lambda} - \hyperg {a}{b}{c}{1}\right| \notag \\
& \qquad ~\leq~ \frac{\Gamma(c)}{\Gamma(b)\Gamma(c-b)} \int_0^1
t^{b-1}(1-t)^{c-b-1} \left|(1-t\lambda)^{-a}-(1-t)^{-a}\right| dt \label{eqn:est4hyperg2}
\end{align}
for all $\lambda\in \disk$. By the mean value theorem we have
\begin{align*}
\left|(1-t\lambda)^{-a} - (1-t)^{-a} \right|
&~\leq~  \sup_{\vartheta \in t\disk} \left|a\, (1-\vartheta)^{-a-1}\right|\cdot |1-\lambda|\\
&~\leq~  |a|\cdot \max \left\{ 2^{-a-1}, (1-t)^{-a-1}\right\} |1-\lambda|.
\end{align*}
Substituting this into \eqref{eqn:est4hyperg2} yields the desired inequality \eqref{eqn:est4hyperg1}.
\end{proof}

\subsection{A Forelli-type formula}

The following lemma deals with integration on $b\calU^n$ of functions of fewer variables,
which is in the same spirit as a result of Forelli \cite[p.383]{For74} (see also \cite[p.14]{Rud80} or \cite[p.10, Lemma 1.9]{Zhu05}).

Suppose $0\leq k \leq n$ and let $\calU^k$ be the Siegel upper half-space in $\bbC^{k+1}$. It is convenient to let
\[
\bfrho_k (w) ~:=~ \ImPt w_{k+1} - \sum_{j=1}^k |w_j|^2
\]
We think of $\bfrho_k$ as a ``height function'' in $\calU^k$. Note that
\[
\bfrho_n(z) ~=~ \bfrho(z,z) ~=~ \ImPt z_{n+1} - |z^{\prime}|^2.
\]

\begin{lem} \label{lem:prjformula}
Suppose $0\leq k < n$ and $f$ is a function on $b\calU^n$ that depends only on $z_{n-k+1},\cdots,z_{n+1}$.
Then $f$ can be regarded as defined on $\calU^{k}$ and
\begin{equation*}\label{eqn:prjformula}
\int\limits_{b\calU^n} (f\circ \Pi_k) d\beta = \frac {\pi^{n-k}} {(n-k-1)!}  \int\limits_{\calU^{k}}
\bfrho_k(w)^{n-k-1} f(w) dm_{2k+2}(w),
\end{equation*}
where $\Pi_k$ is the orthogonal projection of $\bbC^{n+1}$ onto $\bbC^{k+1}$ given by
\[
(z_1,\cdots,z_{n+1}) ~\longmapsto~ (z_{n-k+1},\cdots,z_{n+1}),
\]
and
\[
c_{n,k}:= \frac {\pi^{n-k}} {(n-k-1)!}.
\]
In particular when $k=n-1$, this reads
\begin{equation}\label{eqn:prjformula2}
\int\limits_{b\calU^n} (f\circ \Pi_{n-1}) d\beta = \pi  \int\limits_{\calU^{n-1}}
 f(w)  dm_{2n}(w).
\end{equation}
\end{lem}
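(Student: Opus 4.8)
The plan is to reduce everything to a concrete Lebesgue integral via the integration formula \eqref{eqn:defnofbeta} and then to integrate out the $n-k$ variables that $f$ ignores. Writing a boundary point of $\calU^n$ as $(z',t+i|z'|^2)$ with $z'=(z_1,\ldots,z_n)$ and $t\in\bbR$, the integrand $(f\circ\Pi_k)(z',t+i|z'|^2)=f(z_{n-k+1},\ldots,z_n,\,t+i|z'|^2)$ depends on $z'$ only through the \emph{inner} block $w_j:=z_{n-k+j}$ ($1\le j\le k$) and through the height $|z'|^2$. First I would separate $z'$ into this inner block and the \emph{outer} block $z_1,\ldots,z_{n-k}$, noting that the outer block enters only through $u:=|z_1|^2+\cdots+|z_{n-k}|^2$.

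Next I would carry out the radial integration in the outer variables. Because $k<n$, we have $n-k\ge1$, and the elementary identity $\int_{\bbC^m}G(|\zeta|^2)\,dm_{2m}(\zeta)=\frac{\pi^m}{(m-1)!}\int_0^{\infty}G(u)\,u^{m-1}\,du$ with $m=n-k$ collapses the $2(n-k)$ real outer coordinates to a single one-dimensional integral, producing the weight $u^{\,n-k-1}$ and the constant $\frac{\pi^{n-k}}{(n-k-1)!}=c_{n,k}$. At this stage the integral runs over $(w_1,\ldots,w_k)\in\bbC^k$, over $t\in\bbR$, and over $u\in(0,\infty)$.

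The decisive step is a change of variables that reassembles $\calU^k$. Fixing the inner block and putting $v:=|w_1|^2+\cdots+|w_k|^2$, I would set $w_{k+1}:=t+i(u+v)$; the map $(t,u)\mapsto w_{k+1}$ has unit Jacobian and sends $\bbR\times(0,\infty)$ bijectively onto $\{\ImPt w_{k+1}>v\}$, so that $(w_1,\ldots,w_{k+1})$ sweeps out exactly $\calU^k$ and $du\,dt\,dm_{2k}(w_1,\ldots,w_k)$ becomes $dm_{2k+2}(w)$. The point is that under this substitution $u=\ImPt w_{k+1}-v=\bfrho_k(w)$, so the weight $u^{\,n-k-1}$ turns into $\bfrho_k(w)^{\,n-k-1}$; assembling the pieces gives the asserted formula. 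The special case $k=n-1$ is then immediate, since $n-k-1=0$ removes the weight while $c_{n,n-1}=\pi$.

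I expect the only genuine care to be bookkeeping: keeping the inner/outer splitting straight and checking that the interchange of integrations is legitimate, which for the continuous, compactly supported $f$ in the statement is guaranteed by Tonelli's theorem. The conceptual heart is the identification $u=\bfrho_k(w)$, which is precisely what makes the height weight $\bfrho_k^{\,n-k-1}$ appear on the right-hand side.
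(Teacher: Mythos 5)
Your proof is correct, but it follows a genuinely different route from the paper's. The paper introduces the auxiliary volume integral $I(r)=\int_{\{\bfrho_n(z)>r\}}(f\circ \Pi_k)\,dV$ and computes the one-sided derivative $I'(0)$ in two ways: once via the Fubini formula \eqref{eqn:fubini}, which identifies $-I'(0)$ with the boundary integral $\int_{b\calU^n}(f\circ\Pi_k)\,d\bfbeta$, and once by slicing over the last $k+1$ variables so that the inner integral is the volume $\tfrac{\pi^{n-k}}{(n-k)!}\left(\bfrho_k(z^{\ddag})-r\right)^{n-k}$ of a ball in $\bbC^{n-k}$; differentiating in $r$ is what produces the exponent $n-k-1$ and the factorial $(n-k-1)!$. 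You instead work directly on $b\calU^n$ from the defining formula \eqref{eqn:defnofbeta}, integrate out the $n-k$ unused complex variables in polar coordinates via $\int_{\bbC^m}G(|\zeta|^2)\,dm_{2m}=\tfrac{\pi^m}{(m-1)!}\int_0^\infty G(u)u^{m-1}du$, and then reassemble $\calU^k$ through the unit-Jacobian substitution $w_{k+1}=t+i(u+v)$, under which $u=\bfrho_k(w)$. Your computation is complete and correct (the indexing, the constant $c_{n,k}$, and the identification of the region $\{\ImPt w_{k+1}>v\}$ with $\calU^k$ all check out), and it has the advantage of avoiding both the differentiation at $r=0$ and the preliminary reduction to $f$ supported in $\{\bfrho_k>r_0\}$ that the paper needs to justify that differentiation; the paper's version, in exchange, reuses the already-established formula \eqref{eqn:fubini} and only needs the elementary volume of a ball rather than an explicit polar-coordinates identity.
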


\begin{proof}
For convenience, we use the notation $z=(z^{\dag}, z^{\ddag})$,  where
\[
z^{\dag}=(z_1,\ldots,z_{n-k})\in \bbC^{n-k}
\quad \text{and} \quad z^{\ddag} =(z_{n-k+1},\cdots,z_{n+1}) \in \bbC^{k+1}.
\]
By an approximation argument, it suffices for us to prove the result when $f$ is continuous
in $\bbC^{k+1}$ and has support in $\left\{z^{\ddag} \in \bbC^{k+1}: \bfrho_k (z^{\ddag})>r_0\right\}$
for some $r_0> 0$.

Fix such an $f$ and consider the integrals
\[
I(r)=\int\limits_{\{\bfrho_n (z)>r\}} (f\circ \Pi_k) dV, \qquad 0<r<\infty.
\]
By Fubini's theorem \eqref{eqn:fubini}, we have
\[
I(r)=\int\limits_{r}^{\infty} \Bigg\{\int\limits_{b\calU^n} (f\circ \Pi_k) (u+t\bfi) d\bfbeta(u) \Bigg\}dt.
\]
We then differentiate this to obtain
\begin{equation}\label{eqn:Iprime1}
I^{\prime}(0)= - \int\limits_{b\calU^n} (f\circ \Pi_k) (u) d\bfbeta(u).
\end{equation}
On the other hand, if $0<r<r_0$, an application of the classical Fubini's theorem shows that
\begin{align*}
I(r)~=~& \int\limits_{\{\bfrho_k(z^{\ddag})>r\}} \Bigg \{\int\limits_{\{|z^{\dag}|^2< \bfrho_k(z^{\ddag})-r\}}
(f\circ \Pi_k) (z^{\dag},z^{\ddag})  dm_{2n-2k}(z^{\dag})\Bigg\} dm_{2k+2}(z^{\ddag})\\
=~&\int\limits_{\{\bfrho_k(z^{\ddag})>r\}} \frac {\pi^{n-k}} {(n-k)!} \left(\bfrho_k(z^{\ddag})-r\right)^{n-k}
f(z^{\ddag})  dm_{2k+2}(z^{\ddag})\\
=~& \frac {\pi^{n-k}} {(n-k)!}  \int\limits_{\calU^{k}} \left(\bfrho_k(z^{\ddag})-r\right)^{n-k}
f(z^{\ddag})  dm_{2k+2}(z^{\ddag}),
\end{align*}
where the last equality follows from the assumption that $f$ is supported in
$\{z^{\ddag}\in \bbC^{k+1}: \bfrho_k(z^{\ddag})>r_0\}$.
Differentiation then gives
\begin{equation}\label{eqn:Iprime2}
I^{\prime}(0) = - \frac {\pi^{n-k}} {(n-k-1)!}  \int\limits_{\calU^{k}} \bfrho_k(z^{\ddag})^{n-k-1}
f(z^{\ddag})  dm_{2k+2}(z^{\ddag}).
\end{equation}
Comparison of \eqref{eqn:Iprime1} and \eqref{eqn:Iprime2} gives \eqref{eqn:prjformula}.
\end{proof}

\begin{cor}
Let $f$ be a function of one complex variable. Then, for any $z\in b\calU^n$, we have
\begin{equation}
\int\limits_{b\calU^n} f(2i\bfrho(w,z)) d\bfbeta(w) ~=~
\frac {\pi^n}{(n-1)!} \int\limits_{\bbC_{+}} f(\lambda) (\ImPt \lambda)^{n-1} dm_{2}(\lambda).
\end{equation}
\end{cor}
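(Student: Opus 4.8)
The plan is to exploit the simple transitivity of the Heisenberg group action on $b\calU^n$ to reduce to the case where $z$ is the origin $(0^\prime,0)$, and then to recognize the resulting integral as the instance $k=0$ of the Forelli-type formula of Lemma \ref{lem:prjformula}. Throughout I assume $n\geq 1$, as the factor $(n-1)!$ already indicates.

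First I would fix $z=(z^\prime,z_{n+1})\in b\calU^n$, so that $\ImPt z_{n+1}=|z^\prime|^2$, and choose the group element $h=[z^\prime,\RePt z_{n+1}]\in\bbH^n$. A direct check from the action \eqref{eqn:groupaction} shows $h(0^\prime,0)=(z^\prime,\RePt z_{n+1}+i|z^\prime|^2)=z$. Using the invariance $d\bfbeta(h(w))=d\bfbeta(w)$ of the measure, I would perform the substitution $w\mapsto h(w)$, and then invoke the invariance \eqref{eqn:h-invariance4rho} of $\bfrho$, which is an algebraic identity in the two arguments and hence persists when both arguments lie on $b\calU^n$. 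This gives
\[
\int\limits_{b\calU^n} f(2i\bfrho(w,z)) d\bfbeta(w)
= \int\limits_{b\calU^n} f\big(2i\bfrho(h(w),h(0^\prime,0))\big) d\bfbeta(w)
= \int\limits_{b\calU^n} f\big(2i\bfrho(w,(0^\prime,0))\big) d\bfbeta(w),
\]
so it suffices to treat $z=(0^\prime,0)$.

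Next I would evaluate $\bfrho(w,(0^\prime,0))$ straight from the definition of $\bfrho$: since the second argument has vanishing spatial part and vanishing $(n+1)$-th coordinate, one finds $\bfrho(w,(0^\prime,0))=-\tfrac{i}{2}w_{n+1}$, whence $2i\bfrho(w,(0^\prime,0))=w_{n+1}$. Thus the integrand becomes $f(w_{n+1})$, a function depending only on the last coordinate; in the language of Lemma \ref{lem:prjformula} it is $f\circ\Pi_0$ with $\Pi_0(w)=w_{n+1}$. Applying that lemma with $k=0$, for which $\calU^0=\bbC_+$, $\bfrho_0(\lambda)=\ImPt\lambda$, and $c_{n,0}=\pi^n/(n-1)!$, yields precisely
\[
\int\limits_{b\calU^n} f(w_{n+1}) d\bfbeta(w)
= \frac{\pi^n}{(n-1)!}\int\limits_{\bbC_+} f(\lambda)\,(\ImPt\lambda)^{n-1}\,dm_2(\lambda),
\]
which is the assertion.

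I do not anticipate a genuine obstacle here, since the corollary is essentially a specialization of Lemma \ref{lem:prjformula} after a change of variables. The only points that require care are bookkeeping ones: applying the two invariance identities in the correct direction during the reduction to $z=(0^\prime,0)$, and observing that a function of one complex variable of the kind to which the formula is applied satisfies the continuity/compact-support approximation hypotheses under which Lemma \ref{lem:prjformula} was established (so that the identity extends by the usual density argument).
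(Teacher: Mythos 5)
Your proof is correct and follows essentially the same route as the paper: reduce to $z=(0^\prime,0)$ by the Heisenberg translation together with the invariance of $d\bfbeta$ and of $\bfrho$, note that $2i\bfrho(w,(0^\prime,0))=w_{n+1}$, and then apply Lemma \ref{lem:prjformula} with $k=0$. The only (immaterial) difference is that you use the element $h$ with $h(0^\prime,0)=z$ whereas the paper uses its inverse, with $h(z)=(0^\prime,0)$.
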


\begin{proof}
Let $z\in b\calU^n$ be fixed. We put $h:=[-z^{\prime}, -z_{n+1}+i|z^{\prime}|^2]\in \bbH^n$.
Then $h(z)=0$. Hence, by \eqref{eqn:h-invariance4rho}, we see that
\begin{align*}
\int\limits_{b\calU^n} f(2i\bfrho(w,z)) d\bfbeta(w) ~=~& \int\limits_{b\calU^n} f(2i\bfrho(h(w),0)) d\bfbeta(w)\\
=~& \int\limits_{b\calU^n} f(2i\bfrho(u,0)) d\bfbeta(u) ~=~ \int\limits_{b\calU^n} f(u_{n+1}) d\bfbeta(u),
\end{align*}
where the second equality follows from the $\bbH^n$-invariance of $d\bfbeta$.
Finally, an application of Lemma \ref{lem:prjformula} (with $k=0$) completes the proof.
\end{proof}

\begin{cor}
Suppose $1\leq k < n$. Then for $f\in L^1(b\calU^n)$ we have
\begin{equation*}
\int\limits_{b\calU^n} f d\bfbeta = c_{n,k} \int\limits_{\calU^{k}} \Bigg\{
\int\limits_{\mathbb{S}_{n-k}} f(\sqrt{\bfrho_k(w)}\, \eta, w) d\sigma_{n-k}(\eta) \Bigg\} \bfrho_k(w)^{n-k-1} dm_{2k+2}(w).
\end{equation*}
In particular, when $k=n-1$,
\begin{equation}\label{eqn:forellispcase1}
\int\limits_{b\calU^n} f d\bfbeta = \pi \int\limits_{\calU^{n-1}} \Bigg\{ \frac {1}{2\pi}
\int\limits_{0}^{2\pi} f(\sqrt{\bfrho_{n-1}(w)}\, e^{i\theta}, w) d\theta \Bigg\}  dm_{2n}(w).
\end{equation}
\end{cor}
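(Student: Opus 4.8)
The plan is to deduce the formula from Lemma \ref{lem:prjformula} by exploiting the invariance of $d\bfbeta$ under rotations in the first $n-k$ complex coordinates. Keeping the notation $z=(z^{\dag},z^{\ddag})$ of the proof of Lemma \ref{lem:prjformula}, with $z^{\dag}=(z_1,\ldots,z_{n-k})\in\bbC^{n-k}$ and $z^{\ddag}=(z_{n-k+1},\ldots,z_{n+1})\in\bbC^{k+1}$, the starting point is the boundary identity
\[
\bfrho_k(z^{\ddag})=|z^{\dag}|^2,\qquad z\in b\calU^n,
\]
which is immediate from $\ImPt z_{n+1}=|z'|^2=|z^{\dag}|^2+\sum_{j=1}^{k}|z_{n-k+j}|^2$ on $b\calU^n$. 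Thus on the boundary the radius $|z^{\dag}|$ equals $\sqrt{\bfrho_k(z^{\ddag})}$, which is exactly the quantity occurring in the statement.

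Given $f\in L^1(b\calU^n)$, I would form its average over the $z^{\dag}$-sphere,
\[
\tilde f(w):=\int\limits_{\mathbb{S}_{n-k}} f\big(\sqrt{\bfrho_k(w)}\,\eta,\,w\big)\,d\sigma_{n-k}(\eta),\qquad w\in\calU^k .
\]
This $\tilde f$ is a function of $w=z^{\ddag}$ alone, so Lemma \ref{lem:prjformula} applies to it verbatim and yields
\[
\int\limits_{b\calU^n}(\tilde f\circ\Pi_k)\,d\bfbeta=c_{n,k}\int\limits_{\calU^{k}}\bfrho_k(w)^{n-k-1}\tilde f(w)\,dm_{2k+2}(w),
\]
whose right-hand side is precisely the right-hand side of the Corollary. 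Hence the whole statement reduces to the single identity $\int_{b\calU^n}f\,d\bfbeta=\int_{b\calU^n}(\tilde f\circ\Pi_k)\,d\bfbeta$. To establish it I would parametrize $b\calU^n$ by $z'=(z^{\dag},w')\in\bbC^{n-k}\times\bbC^{k}$ and $t\in\bbR$, with $z_{n+1}=t+i|z'|^2$ and $w=(w',z_{n+1})\in\calU^{k}$, so that $d\bfbeta=dz^{\dag}\,dw'\,dt$ by \eqref{eqn:defnofbeta}. Passing to polar coordinates $z^{\dag}=s\eta$ ($s\geq0,\ \eta\in\mathbb{S}_{n-k}$) in the inner integral, the decisive observation is that $w=z^{\ddag}$ depends on $z^{\dag}$ only through $|z^{\dag}|^2=s^2$ (via $\ImPt z_{n+1}$); consequently $w$ is unchanged as $\eta$ ranges over a fixed sphere, and the spherical average of $f$ over $\{|z^{\dag}|=s\}$ coincides with that of $\tilde f\circ\Pi_k$. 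Integrating first in $\eta$, then in $s$ and $(w',t)$, gives the claimed equality.

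The only real work is the measure bookkeeping, which I expect to be the main—though routine—obstacle: the surface-area normalization of $\mathbb{S}_{n-k}$ (equal to $2\pi^{n-k}/(n-k-1)!$), the polar factor $s^{2(n-k)-1}\,ds$, and the real Jacobian $2s$ of $(s,t)\mapsto z_{n+1}=t+i(s^2+|w'|^2)$ must combine, through $s^2=\bfrho_k(w)$, into exactly $c_{n,k}\,\bfrho_k(w)^{n-k-1}\,dm_{2k+2}(w)$; one also checks that $(s>0,\eta,w',t)\leftrightarrow(\eta,w\in\calU^{k})$ is a bijection. Carrying out this computation directly on $\int_{b\calU^n}f\,d\bfbeta=\int_{\bbC^n\times\bbR}f\,dz'\,dt$ in fact proves the Corollary without invoking Lemma \ref{lem:prjformula} at all, and makes the appearance of $c_{n,k}$ transparent; I would likely present it this way and note the reduction above as a conceptual shortcut. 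Finally, setting $k=n-1$ gives $\mathbb{S}_{n-k}=\mathbb{S}_1$ with $d\sigma_1=\tfrac{1}{2\pi}\,d\theta$, $\bfrho_k(w)^{n-k-1}=1$, and $c_{n,n-1}=\pi$, which is exactly \eqref{eqn:forellispcase1}.
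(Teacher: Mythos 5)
Your proof is correct, and its overall skeleton matches the paper's: both reduce the Corollary to Lemma \ref{lem:prjformula} by replacing $f$ with its spherical average $\tilde f$ in the $z^{\dag}$-variable and checking that $\int_{b\calU^n} f\,d\bfbeta=\int_{b\calU^n}(\tilde f\circ\Pi_k)\,d\bfbeta$. Where you genuinely diverge is in how that averaging identity is justified. The paper uses the invariance of $d\bfbeta$ under unitary transformations of $z'$ (fixing $z_{n+1}$): one averages $f(\mathbf{g}z^{\dag},z^{\ddag})$ over the unitary group and invokes Fubini, which is short and coordinate-free but leaves the constant $c_{n,k}$ entirely to Lemma \ref{lem:prjformula}. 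You instead compute directly from \eqref{eqn:defnofbeta} in polar coordinates $z^{\dag}=s\eta$, using that $z^{\ddag}$ depends on $z^{\dag}$ only through $s^2=\bfrho_k(z^{\ddag})$; your bookkeeping (surface area $2\pi^{n-k}/(n-k-1)!$, the factor $s^{2(n-k)-1}\,ds=\tfrac12 u^{n-k-1}\,du$, and the Jacobian $2s$ absorbed into $dm_2(z_{n+1})$) is right and assembles to exactly $c_{n,k}\,\bfrho_k(w)^{n-k-1}\,dm_{2k+2}(w)$. This buys you a self-contained proof that bypasses Lemma \ref{lem:prjformula} altogether and makes the constant transparent, at the cost of an explicit change-of-variables computation that the paper's invariance argument avoids; it also quietly re-derives the lemma's content for boundary integrals. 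The $k=n-1$ specialization is handled correctly in both.
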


\begin{proof}
As in the proof of Lemma \ref{lem:prjformula}, we write $z=(z^{\dag}, z^{\ddag})$, where
$z^{\dag}\in \bbC^{n-k}$ and $z^{\ddag}\in \bbC^{k+1}$. Then
\[
\int\limits_{b\calU^n} f d\bfbeta = \int\limits_{b\calU^n} f(z^{\dag}, z^{\ddag}) d\bfbeta (z).
\]
Note that if $z=(z^{\dag}, z^{\ddag})\in b\calU^n$ then $|z^{\dag}|^2 = \bfrho_k(z^{\ddag})$.
Since $d\bfbeta(\mathbf{g}(z))=d\bfbeta(z)$ for every unitary linear transformation $\mathbf{g}$ on $\bbC^n$,
where $\mathbf{g}((z^{\prime}, z_{n+1})) := (\mathbf{g}(z^{\prime}),z_{n+1})$,
\[
\int\limits_{b\calU^n} f(z^{\dag}, z^{\ddag}) d\bfbeta (z) = \int\limits_{b\calU^n} f(|z^{\dag}|\,\eta, z^{\ddag}) d\bfbeta (z)
= \int\limits_{b\calU^n} f(\sqrt{\bfrho_k(z^{\ddag})}\, \eta, z^{\ddag}) d\bfbeta (z)
\]
for any fixed $\eta\in \mathbb{S}_{n-k}$. Integrating over $\eta\in \mathbb{S}_{n-k}$ and applying Fubini's
theorem, we obtain
\[
\int\limits_{b\calU^n} f d\bfbeta = \int\limits_{b\calU^n} \Bigg\{ \int\limits_{\mathbb{S}_{n-k}}
 f(\sqrt{\bfrho_k(z^{\ddag})}\, \eta, z^{\ddag}) d\sigma_{n-k}(\eta) \Bigg\} d\bfbeta (z).
\]
The inner integral above defines a function that only depends on the last $k+1$ variables.
Therefore, an application of Lemma \ref{lem:prjformula} completes the proof.
\end{proof}

\subsection{Evaluation of some integrals}

\begin{lem}\label{lem:keylemma}
Let $\theta>0$ and $\gamma>-1$. The identities
\begin{equation}\label{eqn:keylem1}
\int\limits_{b\calU^n} \frac {d\bfbeta(u)} {|\bfrho(z,u)|^{n+1+\theta}} ~=~
\frac {4\pi^{n+1} \Gamma(\theta)} {\Gamma^2\left(\frac {n+1+\theta}{2}\right)}\ \bfrho(z,z)^{-\theta}
\end{equation}
and
\begin{equation}\label{eqn:keylem2}
\int\limits_{\calU^n} \frac {\bfrho(w,w)^{\gamma} dV(w)} {|\bfrho(z,w)|^{n+2+\theta+\gamma}}  ~=~
\frac {4 \pi^{n+1} \Gamma(1+\gamma) \Gamma(\theta)} {\Gamma^2\left(\frac {n+2+\theta+\gamma}{2}\right)}\ \bfrho(z,z)^{-\theta}
\end{equation}
hold for all $z\in \calU^n$.
\end{lem}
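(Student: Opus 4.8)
The plan is to establish \eqref{eqn:keylem1} first by a direct computation, and then to derive \eqref{eqn:keylem2} from it via the Fubini formula \eqref{eqn:fubini}, so that the second identity costs essentially nothing beyond the first. The two structural ideas are: reduce the boundary integral to the vertical axis using the Heisenberg symmetry, and exploit the elementary ``shift'' behaviour of $\bfrho$ under the vertical translation $w\mapsto w+s\bfi$.

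For \eqref{eqn:keylem1}, the first step is to reduce to the case $z=t\bfi$. Given $z\in\calU^n$, set $t:=\bfrho(z,z)>0$ and take $h:=[z',\,\RePt z_{n+1}]\in\bbH^n$; a short check against \eqref{eqn:groupaction} shows $h(t\bfi)=z$. Applying \eqref{eqn:h-invariance4rho} with $w=h^{-1}(u)$ gives $\bfrho(z,u)=\bfrho(t\bfi,h^{-1}(u))$, and after the substitution $u\mapsto h(u)$, which preserves $d\bfbeta$, the integral is unchanged with $z$ replaced by $t\bfi$. It then suffices to evaluate $\int_{b\calU^n}|\bfrho(t\bfi,u)|^{-(n+1+\theta)}\,d\bfbeta(u)$. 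A direct computation from the definition of $\bfrho$ shows that for $u\in b\calU^n$ one has $\bfrho(t\bfi,u)=\tfrac12\big[(t+\ImPt u_{n+1})+i\,\RePt u_{n+1}\big]$, which depends only on $u_{n+1}$.

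Since the integrand now depends only on the last coordinate, I would apply Lemma \ref{lem:prjformula} with $k=0$ (for $n=0$ the integral over $\bbR$ is computed directly), reducing the problem to $\frac{\pi^n}{(n-1)!}\int_{\bbC_{+}}|\tfrac12(t+i\bar\lambda)|^{-(n+1+\theta)}(\ImPt\lambda)^{n-1}\,dm_2(\lambda)$. Writing $\lambda=x+iy$, the $x$-integral is standard, $\int_{\bbR}\big((t+y)^2+x^2\big)^{-s}\,dx=\sqrt\pi\,\frac{\Gamma(s-\frac12)}{\Gamma(s)}(t+y)^{1-2s}$ with $s=\frac{n+1+\theta}{2}$, and the remaining $y$-integral $\int_0^\infty y^{n-1}(t+y)^{-(n+\theta)}\,dy=t^{-\theta}B(n,\theta)$ is a Beta integral. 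Collecting constants and invoking Legendre's duplication formula $\Gamma(n+\theta)=\frac{2^{n+\theta-1}}{\sqrt\pi}\Gamma(\tfrac{n+\theta}{2})\Gamma(\tfrac{n+1+\theta}{2})$ to fold the two Gamma ratios into a single $\Gamma^2(\tfrac{n+1+\theta}{2})$ yields exactly the right-hand side of \eqref{eqn:keylem1}, with $t=\bfrho(z,z)$.

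For \eqref{eqn:keylem2}, I would apply \eqref{eqn:fubini} to write $w=u+s\bfi$ with $u\in b\calU^n$ and $s>0$. Two elementary identities do the work: $\bfrho(u+s\bfi,u+s\bfi)=s$ and $\bfrho(z,u+s\bfi)=\bfrho(z+s\bfi,u)$, the latter converting $|\bfrho(z,w)|$ into $|\bfrho(z+s\bfi,u)|$. The inner boundary integral is then precisely \eqref{eqn:keylem1} with $z$ replaced by $z+s\bfi$ and $\theta$ replaced by $\theta+\gamma+1>0$; since $\bfrho(z+s\bfi,z+s\bfi)=\bfrho(z,z)+s$, it equals a constant times $(\bfrho(z,z)+s)^{-(\theta+\gamma+1)}$. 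There remains $\int_0^\infty s^\gamma(\bfrho(z,z)+s)^{-(\theta+\gamma+1)}\,ds=\bfrho(z,z)^{-\theta}B(\gamma+1,\theta)$, another Beta integral; the factor $\Gamma(\theta+\gamma+1)$ cancels against the denominator of $B(\gamma+1,\theta)$, leaving \eqref{eqn:keylem2}. The computations are mechanical once the setup is fixed, so the main obstacle is bookkeeping rather than ideas: verifying the shift identities for $\bfrho$ carefully, and arranging the two Gamma ratios produced by the successive one-dimensional integrations so that the duplication formula collapses them into the symmetric $\Gamma^2(\cdot)$ of the statement.
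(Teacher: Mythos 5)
Your proposal is correct and follows essentially the same route as the paper: reduce \eqref{eqn:keylem1} to the vertical axis by a Heisenberg translation, evaluate the resulting integral (which depends only on $u_{n+1}$) via a one-dimensional integral in the real part followed by a Beta integral and the duplication formula, and then obtain \eqref{eqn:keylem2} from \eqref{eqn:keylem1} through \eqref{eqn:fubini}, the shift identities $\bfrho(z,u+s\bfi)=\bfrho(z+s\bfi,u)$, $\bfrho(z+s\bfi,z+s\bfi)=\bfrho(z,z)+s$, and one more Beta integral. The only cosmetic difference is that you route the middle computation through Lemma \ref{lem:prjformula} with $k=0$, whereas the paper integrates out $w'$ directly in polar coordinates over $\bbC^n\times\bbR$; these are the same calculation.
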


\begin{proof}
For fixed $z\in \calU^n$, we put $h=[-z^{\prime}, -\mathrm{Re} z_{n+1}]\in \bbH^n$.
Recall that $h\in \bbH^n$ acts on $z\in \calU^n$ by \eqref{eqn:groupaction}. It is easy to check that $h(z) =(0,i\bfrho(z,z))$.
Note also that an element of $b\calU^n$ has the form $(w^{\prime},t+i|w^{\prime}|^2)$ with $t=\mathrm{Re} w_{n+1}$.
It follows that
\[
\bfrho(h(z),w)=\frac {|w^{\prime}|^2+\bfrho(z,z)+it}{2}
\]
for every $w\in b\calU^n$.
Since $d\bfbeta$ is $\bbH^n$-invariant, by making the change of variables $w\mapsto h^{-1}(w)$ and using \eqref{eqn:h-invariance4rho},
we obtain
\begin{align*}
\int\limits_{b\calU^n} \frac {d\bfbeta(w)} {|\bfrho(z,w)|^{n+1+\theta}} 
~=~&\int\limits_{b\calU^n} \frac {d\bfbeta(w)} {|\bfrho(h(z),w)|^{n+1+\theta}}\\
=~& 2^{n+1+\theta} \int\limits_{\bbC^n\times \bbR} \frac {dw^{\prime} dt}
{\left[(|w^{\prime}|^2+ \bfrho(z,z))^2+t^2\right]^{\frac {n+1+\theta}{2}}}.
\end{align*}
A simple scaling argument (which involves carrying out the $t$-integration first) shows that
the last integral equals
\begin{align*}
& \Bigg\{\int\limits_{-\infty}^{\infty} \frac {dt}{(1+t^2)^{\frac {n+1+\theta}{2}}} \Bigg\}
\Bigg\{\int\limits_{0}^{\infty} \frac {r^{2n-1}}{(1+r^2)^{n+\theta}} dr \Bigg\} \frac {2\pi^n}{\Gamma(n)}\, \bfrho(z,z)^{-\theta}\\
&\qquad =~ \frac {\Gamma\left(\frac {1}{2}\right)\Gamma\left(\frac {n+\theta}{2}\right)}{\Gamma(\frac {n+1+\theta}{2})} \frac {\Gamma(n)
\Gamma(\theta)} {2 \Gamma(n+\theta)} \frac {2\pi^n}{\Gamma(n)}\, \bfrho(z,z)^{-\theta}\\
&\qquad =~ 2^{1-n-\theta} \frac {\pi^{n+1} \Gamma(\theta)} {\Gamma^2(\frac {n+1+\theta}{2})}\, \bfrho(z,z)^{-\theta}.
\end{align*}
Here, in the first equality we have used the well-known identity
\[
\int\limits_{0}^{\infty} \frac {t^{2b-1}} {(1+t^2)^{a}} dt = \frac {\Gamma(b)\Gamma(a-b)} {2\Gamma(a)}, \qquad
\text{if } a>b,
\]
and, in the second equality we have used the ``duplication formula''
\[
\Gamma\left(\frac {1}{2}\right)\Gamma(2a) = 2^{2a-1} \Gamma(a)\Gamma\left(a+\frac {1}{2}\right).
\]
This proves \eqref{eqn:keylem1}.

We proceed to prove \eqref{eqn:keylem2}. By \eqref{eqn:fubini}, we have
\[
\int\limits_{\calU^n} \frac {\bfrho(u,u)^{\gamma} du} {|\bfrho(z,u)|^{n+2+\theta+\gamma}}
~=~ \int\limits_0^{\infty} \bigg\{ \int\limits_{b\calU^n}
\frac {d\bfbeta(w)} {|\bfrho(z, w+t\bfi)|^{n+2+\theta+\gamma}}\bigg\} t^{\gamma}dt.
\]
Note that $\bfrho(z, w+t\bfi)=\bfrho(z+t\bfi,w)$ and $\bfrho(z+t\bfi,z+t\bfi)=\bfrho(z,z)+t$.
Applying \eqref{eqn:keylem1} to the inner integral yields
\begin{align*}
\int\limits_{\calU^n} \frac {\bfrho(u,u)^{\gamma} du} {|\bfrho(z,u)|^{n+2+\theta+\gamma}}
~=~& \frac {4\pi^{n+1} \Gamma(1+\theta+\gamma)} {\Gamma^2\left(\frac {n+2+\theta+\gamma}{2}\right)}
\int\limits_0^{\infty} \frac {t^{\gamma} dt} {(\bfrho(z,z)+t)^{1+\theta+\gamma}}\\
~=~& \frac {4\pi^{n+1} \Gamma(1+\theta+\gamma)} {\Gamma^2\left(\frac {n+2+\theta+\gamma}{2}\right)}
\frac {\Gamma(1+\gamma)\Gamma(\theta)} {\Gamma(1+\theta+\gamma)} \bfrho(z,z)^{-\theta},
\end{align*}
as desired.
\end{proof}

\begin{lem}\label{lem:crucial2}
\begin{enumerate}
\item[(i)]
If $a\in \mathbb{R}$ and $\max\{b,c, b+c\}<n+1$, then
\begin{align}\label{eqn:crucial3}
&\int\limits_{\sphere} \frac {d\sigma(\omega)} {(1-\eta\cdot \overline{\omega})^{a} (1-\zeta\cdot \overline{\omega})^{b}
(1-\omega\cdot \overline{\zeta})^{c}} \notag\\
&\qquad ~=~ \frac {n! \Gamma(n+1-b-c)} {\Gamma(n+1-b)\Gamma(n+1-c)} \hyperg {a}{c}{n+1-b} {\eta\cdot \overline{\zeta}}
\end{align}
holds for any $\eta\in \ball$ and $\zeta\in \sphere$.
\item[(ii)]
If $a\in \mathbb{R}$ and $\max\{b,c, b+c\}<n+2$, then
\begin{align}\label{eqn:crucial4}
&\int\limits_{\ball} \frac {dV(\xi)}
{(1-\eta\cdot \overline{\xi})^{a} (1-\zeta\cdot \overline{\xi})^{b}
(1-\xi\cdot \overline{\zeta})^{c}} \notag\\
&\qquad ~=~ \frac {\pi^{n+1} \Gamma(n+2-b-c)} {\Gamma(n+2-b)\Gamma(n+2-c)} \hyperg {a}{c}{n+2-b} {\eta\cdot \overline{\zeta}}
\end{align}
holds for any $\eta\in \ball$ and $\zeta\in \sphere$.
\end{enumerate}
\end{lem}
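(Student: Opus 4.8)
The plan is to prove both identities by a single method: normalize the position of $\zeta$ by unitary invariance, expand the kernels in power series, integrate monomials term by term, and collapse the resulting series with Gauss's theorem \eqref{eqn:gauss}. First I would use that $d\sigma$ (resp.\ Lebesgue measure on $\ball$) is invariant under the unitary group: choosing a unitary $U$ with $Ue_1=\zeta$ and substituting $\omega\mapsto U\omega$ (resp.\ $\xi\mapsto U\xi$) turns the three factors into $(1-(U^{\ast}\eta)\cdot\overline\omega)^{-a}$, $(1-\overline{\omega_1})^{-b}$ and $(1-\omega_1)^{-c}$. Since $(U^{\ast}\eta)\cdot\overline{e_1}=\eta\cdot\overline\zeta$, it suffices to treat $\zeta=e_1=(1,0,\ldots,0)$, and I expect $\eta\cdot\overline\zeta$ to emerge as the only feature of $\eta$ that survives.

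Next, since $\eta\in\ball$, the expansion $(1-\eta\cdot\overline\omega)^{-a}=\sum_{k\ge0}\frac{(a)_k}{k!}(\eta\cdot\overline\omega)^k$ converges uniformly in $\omega\in\sphere$, while the factor $(1-\overline{\omega_1})^{-b}(1-\omega_1)^{-c}$ belongs to $L^1(d\sigma)$ under the hypothesis $b+c<n+1$; this lets me integrate term by term. Expanding each $(\eta\cdot\overline\omega)^k$ as a finite multinomial sum and averaging over the diagonal unitaries fixing $e_1$ shows that $\int_{\sphere}\overline\omega^\alpha(1-\overline{\omega_1})^{-b}(1-\omega_1)^{-c}d\sigma$ vanishes unless $\alpha=(k,0,\ldots,0)$, because the singular factor depends on $\omega_1$ alone. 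Hence only the power $(\eta\cdot\overline\zeta)^k$ remains and the integral becomes $\sum_{k}\frac{(a)_k}{k!}(\eta\cdot\overline\zeta)^k M_k$ with $M_k:=\int_{\sphere}\overline{\omega_1}^{\,k}(1-\overline{\omega_1})^{-b}(1-\omega_1)^{-c}d\sigma$. Expanding the two surviving factors and applying the sphere moment formula $\int_{\sphere}\omega^\beta\overline\omega^\gamma d\sigma=\delta_{\beta\gamma}\,n!\,\beta!/(n+|\beta|)!$ forces the holomorphic and antiholomorphic multidegrees to agree, and leaves $M_k=n!\sum_{\ell\ge0}(b)_\ell(c)_{k+\ell}/(\ell!\,(n+k+\ell)!)$.

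Then I would recognize the inner sum as a hypergeometric value at $1$. Writing $(c)_{k+\ell}=(c)_k(c+k)_\ell$ and $(n+k+\ell)!=(n+k)!\,(n+k+1)_\ell$ gives $M_k=\frac{n!\,(c)_k}{(n+k)!}\,{}_2F_1(b,c+k;n+k+1;1)$, and Gauss's formula \eqref{eqn:gauss}, whose hypothesis $n+1-b-c>0$ is exactly $b+c<n+1$, evaluates it to $n!\,(c)_k\,\Gamma(n+1-b-c)/[\Gamma(n+k+1-b)\Gamma(n+1-c)]$. Using $\Gamma(n+k+1-b)=\Gamma(n+1-b)(n+1-b)_k$ and summing over $k$ yields $\frac{n!\,\Gamma(n+1-b-c)}{\Gamma(n+1-b)\Gamma(n+1-c)}\sum_k\frac{(a)_k(c)_k}{(n+1-b)_k}\frac{(\eta\cdot\overline\zeta)^k}{k!}$, which is \eqref{eqn:crucial3} because $|\eta\cdot\overline\zeta|<1$. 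For part (ii) the computation is identical over $\ball$: I would only replace the sphere moment formula by $\int_{\ball}\xi^\beta\overline\xi^\gamma dV=\delta_{\beta\gamma}\,\pi^{n+1}\beta!/(n+1+|\beta|)!$, which changes $n!$ to $\pi^{n+1}$ and shifts $n\mapsto n+1$ throughout; Gauss's theorem then requires $b+c<n+2$ and one arrives at \eqref{eqn:crucial4}.

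The main obstacle I anticipate is justifying the term-by-term integration against the boundary-singular factor $(1-\overline{\omega_1})^{-b}(1-\omega_1)^{-c}$, whose power series does not converge uniformly near $\omega_1=1$. I would resolve this by first proving both identities for complex $b,c$ with $\RePt b,\RePt c$ sufficiently negative, where absolute convergence of the double series in $L^1$ is immediate, and then extending to the full range $\max\{b,c,b+c\}<n+1$ (resp.\ $<n+2$) by analytic continuation, since both sides are holomorphic in $(b,c)$ there while $\eta\cdot\overline\zeta$ stays in the open unit disk. Alternatively, using the explicit distribution of $\omega_1$ under $d\sigma$ to rewrite $M_k$ as an integral over the unit disk carrying the weight $(1-|w|^2)^{n-1}$ makes the integrability and the interchange transparent, leading to the same closed form for $M_k$.
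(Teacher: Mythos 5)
Your proof is correct, and the computations check out: the unitary reduction to $\zeta=e_1$, the vanishing of off-axis moments via averaging over diagonal unitaries, the moment formulas on $\sphere$ and $\ball$, the collapse of the inner sum to $\hyperg{b}{c+k}{n+k+1}{1}$ via Gauss, and the final resummation all lead to \eqref{eqn:crucial3} and \eqref{eqn:crucial4} exactly. The route differs from the paper's mainly in self-containedness and in how the boundary singularity is regularized. The paper reduces to $b+c<0$ by analytic continuation and then quotes \cite[Lemma 2.3]{Liu15}, which already expresses the dilated integral (with $\zeta$ replaced by $r\zeta$, $r<1$) as a series of hypergeometric values at $r^2$; the only work left is to let $r\to1^-$ under the domination $|1-r\zeta\cdot\overline\omega|^{-b}|1-r\omega\cdot\overline\zeta|^{-c}\le 2^{-b-c}$ and apply \eqref{eqn:gauss} termwise. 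You instead work directly at $r=1$ and derive the series representation from scratch, which forces the stronger temporary hypothesis $\RePt b,\RePt c<0$ (needed for absolute convergence of both binomial expansions on the closed disk, whereas the paper's domination only needs $b+c<0$), recovered afterwards by continuation in the two variables $(b,c)$ over the convex, hence connected, region $\{\RePt b,\RePt c,\RePt(b+c)<n+1\}$. What your version buys is independence from the external lemma and a transparent identification of where each Gamma factor comes from; what the paper's version buys is brevity and a single, weaker auxiliary hypothesis. Both hinge on the same two ingredients, Gauss's summation \eqref{eqn:gauss} and analytic continuation in the exponents, so I would count them as variants of one method rather than genuinely different proofs.
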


\begin{proof}

We may further assume that $b+c<0$; if we
prove the lemma in this special case, the general case
follows by analytic continuation.
We prove only the first part of the lemma, the proof of the second part being similar.

According to \cite[Lemma 2.3]{Liu15}, the identity
\begin{align*}
&\int\limits_{\sphere} \frac {d\sigma(\omega)} {(1-\eta\cdot \overline{\omega})^{a} (1-r\zeta\cdot \overline{\omega})^{b}
(1-r\omega\cdot \overline{\zeta})^{c}} \notag\\
&\qquad ~=~ \sum_{j=0}^{\infty} \frac {(a)_{j} (c)_{j}} {(n+1)_{j} j!} \hyperg {b}{c+j}{n+1+j}{r^2} (r\eta\cdot \overline{\zeta})^j
\end{align*}
holds for all $r\in [0,1)$, $\eta\in \ball$ and $\zeta\in \sphere$.
Note that
\[
\left| \frac {1} {(1-\eta\cdot \overline{\omega})^{a} (1-r\zeta\cdot \overline{\omega})^{b}
(1-r\omega\cdot \overline{\zeta})^{c}}\right|
~\leq~ \frac {2^{-b-c}} {|1- \eta\cdot \overline{\omega}|^{a}},
\]
since $b+c<0$. Letting $r\to 1$, applying the dominated convergence theorem and using \eqref{eqn:gauss}, we obtain
\begin{align*}
&\int\limits_{\sphere} \frac {d\sigma(\omega)} {(1-\eta\cdot \overline{\omega})^{a}
(1-\zeta\cdot \overline{\omega})^{b} (1-\omega\cdot \overline{\zeta})^{c}} \notag\\
&\qquad ~=~ \sum_{j=0}^{\infty} \frac {(a)_{j} (c)_{j}} {(n+1)_{j} j!} \hyperg {b}{c+j}{n+1+j}{1}
(\eta\cdot \overline{\zeta})^j\\
&\qquad ~=~ \sum_{j=0}^{\infty} \frac {(a)_{j} (c)_{j}} {(n+1)_{j} j!} \frac {\Gamma(n+1+j) \Gamma(n+1-b-c)}
{\Gamma(n+1-b+j) \Gamma(n+1-c)} (\eta\cdot \overline{\zeta})^j\\
&\qquad ~=~ \frac {n! \Gamma(n+1-b-c)} {\Gamma(n+1-b)\Gamma(n+1-c)} \hyperg {a}{c}{n+1-b} {\eta\cdot \overline{\zeta}}
\end{align*}
as desired.
\end{proof}

\begin{lem}
For $\theta\in \bbR$ and $\gamma>-1$, the identity
\begin{equation}\label{eqn:keyfml2a}
\int\limits_{\ball} \frac{(1-|\xi|^2)^{\gamma}dV(\xi)}{|1-
\eta \cdot \overline{\xi}|^{2\theta}}
=\frac{\pi^{n+1}\Gamma(1+\gamma)}{\Gamma(n+2+\gamma)}
\hyperg{\theta}{\theta}{n+2+\gamma}{|\eta|^2}
\end{equation}
holds for all $\eta\in \ball$.
\end{lem}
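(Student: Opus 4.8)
The plan is to evaluate the integral by passing to polar coordinates, reducing it to a radial integral of a spherical average; the spherical average is the classical Forelli--Rudin integral, and the surviving one-dimensional integral is a hypergeometric transformation that raises the bottom parameter from $n+1$ to $n+2+\gamma$. The whole computation stays within the machinery already assembled in this section.

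First I would write $\xi=r\zeta$ with $r\in[0,1)$ and $\zeta\in\sphere$, so that with the normalized surface measure $d\sigma$ one has $dV=\tfrac{2\pi^{n+1}}{n!}\,r^{2n+1}\,dr\,d\sigma(\zeta)$. The left-hand side then becomes
\[
\frac{2\pi^{n+1}}{n!}\int_0^1 r^{2n+1}(1-r^2)^{\gamma}\left(\int\limits_{\sphere}\frac{d\sigma(\zeta)}{|1-r\eta\cdot\overline{\zeta}|^{2\theta}}\right)dr.
\]
The inner integral is the classical identity $\int_{\sphere}|1-w\cdot\overline{\zeta}|^{-2\theta}\,d\sigma(\zeta)=\hyperg{\theta}{\theta}{n+1}{|w|^2}$, valid for $w\in\ball$; applied to $w=r\eta$ it equals $\hyperg{\theta}{\theta}{n+1}{r^2|\eta|^2}$. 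This identity is itself the $b=0$ specialization of the series formula of \cite[Lemma 2.3]{Liu15} that underlies Lemma \ref{lem:crucial2}, or it may be obtained by expanding $|1-w\cdot\overline{\zeta}|^{-2\theta}$ and using the standard sphere monomial integrals $\int_{\sphere}|w\cdot\overline{\zeta}|^{2j}\,d\sigma(\zeta)=\tfrac{j!}{(n+1)_j}|w|^{2j}$.

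After the substitution $s=r^2$ the expression collapses to $\tfrac{\pi^{n+1}}{n!}\int_0^1 s^n(1-s)^{\gamma}\hyperg{\theta}{\theta}{n+1}{s|\eta|^2}\,ds$, and the final step is the parameter-raising integral
\[
\int_0^1 s^{c-1}(1-s)^{d-c-1}\hyperg{a}{b}{c}{s\lambda}\,ds=\frac{\Gamma(c)\Gamma(d-c)}{\Gamma(d)}\hyperg{a}{b}{d}{\lambda},\qquad d>c>0,
\]
applied with $a=b=\theta$, $c=n+1$, $d=n+2+\gamma$ (so $d-c=\gamma+1>0$ by the hypothesis $\gamma>-1$). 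I would establish this in place: expand the hypergeometric series, interchange sum and integral, and evaluate each term by the Beta integral $\int_0^1 s^{n+k}(1-s)^{\gamma}\,ds=\tfrac{\Gamma(n+k+1)\Gamma(\gamma+1)}{\Gamma(n+k+\gamma+2)}$. The point is that $\Gamma(n+k+1)=n!\,(n+1)_k$ cancels the $(n+1)_k$ in the hypergeometric coefficient, while $\Gamma(n+k+\gamma+2)=\Gamma(n+2+\gamma)(n+2+\gamma)_k$ installs the new bottom parameter, reassembling $\hyperg{\theta}{\theta}{n+2+\gamma}{|\eta|^2}$; combining the constants produces exactly $\tfrac{\pi^{n+1}\Gamma(1+\gamma)}{\Gamma(n+2+\gamma)}$ in front.

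The only genuine care needed is the justification of the term-by-term integration, and this is the main (though routine) obstacle. Since $\eta\in\ball$ forces $\rho:=|\eta|^2<1$ and $s|\eta|^2\le\rho$ on $[0,1]$, the series $\sum_k\tfrac{(\theta)_k(\theta)_k}{(n+1)_k\,k!}(s|\eta|^2)^k$ is dominated by its value at $\rho$, and $\sum_k\tfrac{(\theta)_k(\theta)_k}{(n+1)_k\,k!}\rho^k\int_0^1 s^n(1-s)^{\gamma}\,ds<\infty$, so Tonelli (or dominated convergence) legitimizes the interchange. As an alternative that sidesteps polar coordinates altogether, one can expand $|1-\eta\cdot\overline{\xi}|^{-2\theta}=\sum_{j,k}\tfrac{(\theta)_j(\theta)_k}{j!\,k!}(\eta\cdot\overline{\xi})^j(\overline{\eta}\cdot\xi)^k$, integrate term-by-term over $\ball$ using the monomial integral $\int_{\ball}\xi^{\alpha}\overline{\xi}^{\beta}(1-|\xi|^2)^{\gamma}\,dV=\delta_{\alpha\beta}\tfrac{\pi^{n+1}\Gamma(1+\gamma)\,\alpha!}{\Gamma(n+2+\gamma+|\alpha|)}$, observe that only the diagonal $j=k$ survives by rotation invariance, and collapse the multi-index sum via $\sum_{|\alpha|=j}\tfrac{|\eta^{\alpha}|^2}{\alpha!}=\tfrac{|\eta|^{2j}}{j!}$; this yields the same series directly and with the same convergence bookkeeping.
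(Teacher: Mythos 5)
Your computation is correct, and all the constants check out: the polar-coordinate factor $\tfrac{2\pi^{n+1}}{n!}r^{2n+1}$, the Forelli--Rudin spherical average $\hyperg{\theta}{\theta}{n+1}{r^2|\eta|^2}$ (bottom parameter $n+1$ because $\sphere\subset\bbC^{n+1}$), and the Beta-integral bookkeeping that trades $(n+1)_k$ for $(n+2+\gamma)_k$ all combine to give exactly $\tfrac{\pi^{n+1}\Gamma(1+\gamma)}{\Gamma(n+2+\gamma)}\hyperg{\theta}{\theta}{n+2+\gamma}{|\eta|^2}$. The interchange of sum and integral is legitimate as you say, and in fact more simply than you argue: since $(\theta)_k(\theta)_k=\bigl((\theta)_k\bigr)^2\ge 0$, every term of the series is nonnegative and Tonelli applies outright, with convergence of $\hyperg{\theta}{\theta}{n+1}{\rho}$ at $\rho=|\eta|^2<1$ guaranteeing finiteness. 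The only point of comparison with the paper is that the paper supplies no proof at all here --- it simply cites \cite[Corollary 2.4]{Liu15} --- so your argument is a self-contained derivation of what the paper treats as an imported fact. What your version buys is transparency about where each ingredient of the formula comes from (the $n+1$ versus $n+2+\gamma$ bottom parameters, the role of $\gamma>-1$ as integrability of $(1-s)^{\gamma}$); your alternative route via the monomial integrals over $\ball$ is equally valid and is the more common textbook presentation (cf.\ Rudin or Zhu), reaching the same series in one step.
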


\begin{proof}
See \cite[Corollary 2.4]{Liu15}.
\end{proof}

\begin{lem}
Let $\gamma>-1$ and $\theta\in \bbR$. Then
\begin{align}\label{eqn:myformula}
\int\limits_{\calU^n} & \frac {\bfrho(w,w)^{\gamma}dV(w)} {|\bfrho(z,w)|^{2\theta} |\bfrho(w,\bfi)|^{2(n+2+\gamma-\theta)}}  \notag \\
&= \frac {4 \pi^{n+1} \Gamma(1+\gamma)}{\Gamma(n+2+\gamma)} \hyperg {\theta} {\theta} {n+2+\gamma} {1-\frac {\bfrho(z,z)}{|\bfrho(\bfi,z)|^2}} |\bfrho(z,\bfi)|^{-2\theta}
\end{align}
holds for all $z\in \calU^n$.
\end{lem}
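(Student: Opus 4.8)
The plan is to pull the integral back to the unit ball $\ball$ via the Cayley transform $\Phi:\ball\to\calU^n$, reducing everything to the already-established ball identity \eqref{eqn:keyfml2a}. Concretely, I would substitute $w=\Phi(\xi)$ with $\xi\in\ball$ and set $\eta:=\Psi(z)$, so that $z=\Phi(\eta)$. The key observation that makes this substitution transparent is that $\bfi=(0^\prime,i)=\Phi(0)$, so every factor in the integrand can be rewritten by the single identity \eqref{eqn:identity14phi} together with the Jacobian \eqref{eqn:jacobian4phi}.

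Carrying this out, \eqref{eqn:identity14phi} gives
\[
\bfrho(w,w)=\frac{1-|\xi|^2}{|1+\xi_{n+1}|^2},\qquad
\bfrho(w,\bfi)=\frac{1}{1+\xi_{n+1}},\qquad
\bfrho(z,w)=\frac{1-\eta\cdot\overline{\xi}}{(1+\eta_{n+1})(1+\overline{\xi}_{n+1})},
\]
while $dV(w)=4|1+\xi_{n+1}|^{-2(n+2)}\,dV(\xi)$ by \eqref{eqn:jacobian4phi}. The step I expect to be the real crux is the bookkeeping of the powers of $|1+\xi_{n+1}|$: the height contributes $-2\gamma$, the factor $|\bfrho(z,w)|^{-2\theta}$ contributes $+2\theta$, the factor $|\bfrho(w,\bfi)|^{-2(n+2+\gamma-\theta)}$ contributes $+2(n+2+\gamma-\theta)$, and the Jacobian contributes $-2(n+2)$. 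These sum to zero, and this exact cancellation is what the whole argument hinges on. After it, the integral collapses to
\[
4\,|1+\eta_{n+1}|^{2\theta}\int\limits_{\ball}\frac{(1-|\xi|^2)^{\gamma}}{|1-\eta\cdot\overline{\xi}|^{2\theta}}\,dV(\xi),
\]
to which \eqref{eqn:keyfml2a} applies directly, producing the constant $\frac{\pi^{n+1}\Gamma(1+\gamma)}{\Gamma(n+2+\gamma)}$ and the factor $\hyperg{\theta}{\theta}{n+2+\gamma}{|\eta|^2}$.

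It then remains to re-express $|\eta|^2$ and $|1+\eta_{n+1}|^{2\theta}$ in terms of $z$. The first is immediate from \eqref{eqn:cayleyidentity2}, giving $|\eta|^2=1-\bfrho(z,z)/|\bfrho(z,\bfi)|^2$, and since $\bfrho(\bfi,z)=\overline{\bfrho(z,\bfi)}$ one has $|\bfrho(\bfi,z)|=|\bfrho(z,\bfi)|$, matching the hypergeometric argument in the claim. For the prefactor I would compute $1+\eta_{n+1}=2i/(i+z_{n+1})$ from the explicit form of $\Psi$, together with $\bfrho(z,\bfi)=-\frac{i}{2}(i+z_{n+1})$; these give $|1+\eta_{n+1}|^{2\theta}=|\bfrho(z,\bfi)|^{-2\theta}$, so $4\,|1+\eta_{n+1}|^{2\theta}=4\,|\bfrho(z,\bfi)|^{-2\theta}$. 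Assembling the three pieces yields exactly \eqref{eqn:myformula}. The only delicate points are the power count above and this last small algebraic identity relating $1+\eta_{n+1}$ to $\bfrho(z,\bfi)$; everything else is routine substitution.
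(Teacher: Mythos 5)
Your proposal is correct and is essentially the paper's own proof: both reduce the integral to the ball identity \eqref{eqn:keyfml2a} via the Cayley transform, the only cosmetic difference being that you substitute $w=\Phi(\xi)$ and track the powers of $|1+\xi_{n+1}|$ by hand using \eqref{eqn:identity14phi} and \eqref{eqn:jacobian4phi}, whereas the paper runs the same change of variables in the direction $\xi=\Psi(w)$ and lets the pre-packaged identities \eqref{eqn:cayleyidentity1}--\eqref{eqn:jacobian} absorb that bookkeeping. Your exponent count and the identity $1+[\Psi(z)]_{n+1}=\bfrho(z,\bfi)^{-1}$ are both correct, so the argument goes through.
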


\begin{proof}
For fixed $z\in \calU^n$, consider the integral
\[
I(z):=\int\limits_{\bbB^{n+1}} \frac {(1-|\xi|^2)^{\gamma}} {|1-\Psi(z)\cdot \bar{\xi}|^{2\theta}} dV(\xi).
\]
Making the change of variables $\xi=\Psi(w)$ and using \eqref{eqn:cayleyidentity1}--\eqref{eqn:jacobian}, we get
\begin{align}\label{eqn:expint1}
I(z) ~=~& \int\limits_{\calU^n} \left(\frac {\bfrho(w,w)}{|\bfrho(w,\bfi)|^2}\right)^{\gamma}
\left|\frac {\bfrho(z,w)}{\bfrho(z,\bfi)\bfrho(\bfi,w)}\right|^{-2\theta} \frac {1}{4|\bfrho(w,\bfi)|^{2(n+2)}} dV(w) \notag \\
=~& \frac {|\bfrho(z,\bfi)|^{2\theta}}{4} \int\limits_{\calU^n} \frac {\bfrho(w,w)^{\gamma}dV(w)} {|\bfrho(z,w)|^{2\theta} |\bfrho(\bfi,w)|^{2(n+2+\gamma-\theta)}}.
\end{align}
On the other hand, by \eqref{eqn:keyfml2a} and \eqref{eqn:cayleyidentity2},
\begin{align}\label{eqn:expint2}
I(z) ~=~& \frac {\pi^{n+1} \Gamma(1+\gamma)} {\Gamma(n+2+\gamma)}
\hyperg {\theta}{\theta}{n+2+\gamma} {\left|\Psi(z)\right|^2} \notag\\
=~& \frac {\pi^{n+1} \Gamma(1+\gamma)} {\Gamma(n+2+\gamma)}
\hyperg {\theta}{\theta}{n+2+\gamma} {1-\frac {\bfrho(z,z)}{|\bfrho(z,\bfi)|^2}}.
\end{align}
The identity \eqref{eqn:myformula} now follows by comparing \eqref{eqn:expint1} and \eqref{eqn:expint2}.
\end{proof}

\begin{lem}
If $\kappa>-n-2$ and $\theta>\max\{\kappa,0\}$, then
\begin{align}\label{eqn:myformula2}
\int\limits_{\calU^n} & \frac {dV(w)} {\bfrho(z,w)^{n+2} \bfrho(\bfi, w)^{\kappa} \bfrho(w,\bfi)^{\theta-\kappa}}  \notag\\
&=~ \frac {4 \pi^{n+1} \Gamma(\theta)}{\Gamma(\theta-\kappa)\Gamma(n+2+\kappa)}
\hyperg {\theta} {\kappa} {n+2+\kappa} {1-\bfrho(z,\bfi)^{-1}} \bfrho(z,\bfi)^{-\theta}
\end{align}
holds for all $z\in \calU^n$.
\end{lem}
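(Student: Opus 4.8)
The plan is to transfer the integral from $\calU^n$ to the unit ball $\ball$ via the Cayley transform and then invoke the ball evaluation \eqref{eqn:crucial4}. The crucial preliminary observation is that $\Psi(\bfi)=0$, equivalently $\bfi=\Phi(0)$, so that all three $\bfrho$-factors in the integrand are controlled by the single identity \eqref{eqn:identity14phi}. Writing $z=\Phi(\eta)$ with $\eta=\Psi(z)\in\ball$ and substituting $w=\Phi(\xi)$, $\xi\in\ball$, formula \eqref{eqn:identity14phi} gives
\[
\bfrho(z,w)=\frac{1-\eta\cdot\overline{\xi}}{(1+\eta_{n+1})(1+\overline{\xi}_{n+1})},\qquad
\bfrho(\bfi,w)=\frac{1}{1+\overline{\xi}_{n+1}},\qquad
\bfrho(w,\bfi)=\frac{1}{1+\xi_{n+1}},
\]
while the Jacobian $4|1+\xi_{n+1}|^{-2(n+2)}$ is supplied by \eqref{eqn:jacobian4phi}. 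Note that the exponent $n+2$ is an integer, so no branch subtleties arise in the dominant factor.

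First I would carry out this substitution and collect the powers of $1+\xi_{n+1}$ and $1+\overline{\xi}_{n+1}$. After the Jacobian cancels against the denominator, the integral reduces to
\[
4\,(1+\eta_{n+1})^{n+2}\int\limits_{\ball}\frac{(1+\overline{\xi}_{n+1})^{\kappa}(1+\xi_{n+1})^{\theta-\kappa-n-2}}{(1-\eta\cdot\overline{\xi})^{n+2}}\,dV(\xi).
\]
Choosing $\zeta=(0^{\prime},-1)\in\sphere$, so that $1-\zeta\cdot\overline{\xi}=1+\overline{\xi}_{n+1}$ and $1-\xi\cdot\overline{\zeta}=1+\xi_{n+1}$, this is precisely the integral in \eqref{eqn:crucial4} with $a=n+2$, $b=-\kappa$, and $c=n+2+\kappa-\theta$. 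I would then verify that the hypotheses $\kappa>-n-2$ and $\theta>\max\{\kappa,0\}$ are exactly equivalent to the admissibility conditions $\max\{b,c,b+c\}<n+2$, since $b=-\kappa$, $c=n+2+\kappa-\theta$ and $b+c=n+2-\theta$; one also records $n+2-b=n+2+\kappa$, $n+2-c=\theta-\kappa$, and $n+2-b-c=\theta$, which will produce the Gamma factors in \eqref{eqn:myformula2}.

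Applying \eqref{eqn:crucial4} with $\eta\cdot\overline{\zeta}=-\eta_{n+1}$, and using $\bfrho(z,\bfi)=(1+\eta_{n+1})^{-1}$ (again from \eqref{eqn:identity14phi}), so that $-\eta_{n+1}=1-\bfrho(z,\bfi)^{-1}$ and $(1+\eta_{n+1})^{n+2}=\bfrho(z,\bfi)^{-(n+2)}$, the integral becomes
\[
\frac{4\pi^{n+1}\Gamma(\theta)}{\Gamma(\theta-\kappa)\Gamma(n+2+\kappa)}\,\bfrho(z,\bfi)^{-(n+2)}\,\hyperg{n+2}{n+2+\kappa-\theta}{n+2+\kappa}{1-\bfrho(z,\bfi)^{-1}}.
\]
The one non-mechanical step, and the place where the form asserted in \eqref{eqn:myformula2} is recovered, is the application of Euler's transformation \eqref{eqn:euler}. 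Setting $\lambda=1-\bfrho(z,\bfi)^{-1}$, so that $1-\lambda=\bfrho(z,\bfi)^{-1}$, it gives
\[
\hyperg{n+2}{n+2+\kappa-\theta}{n+2+\kappa}{\lambda}=(1-\lambda)^{\theta-n-2}\,\hyperg{\kappa}{\theta}{n+2+\kappa}{\lambda}.
\]
The surplus factor $(1-\lambda)^{\theta-n-2}$ is exactly what converts the prefactor $\bfrho(z,\bfi)^{-(n+2)}=(1-\lambda)^{n+2}$ into $\bfrho(z,\bfi)^{-\theta}=(1-\lambda)^{\theta}$, and after using the symmetry of the Gauss hypergeometric function in its first two parameters one obtains \eqref{eqn:myformula2}. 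The admissibility of Euler's transformation is guaranteed since $|\lambda|=|\eta_{n+1}|\le|\Psi(z)|<1$. I expect the matching of the two hypergeometric normal forms through \eqref{eqn:euler} to be the main obstacle, in the sense that a direct comparison would otherwise make the computed answer look genuinely different from the stated one; everything else is bookkeeping with the Cayley identities.
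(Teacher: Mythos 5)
Your proposal is correct and follows essentially the same route as the paper: the change of variables $w=\Phi(\xi)$ with the Cayley identities \eqref{eqn:identity14phi} and \eqref{eqn:jacobian4phi}, reduction to the ball integral \eqref{eqn:crucial4} with the same parameters (and the same verification that the hypotheses match the admissibility conditions), followed by Euler's transformation \eqref{eqn:euler} and the identity $1+[\Psi(z)]_{n+1}=\bfrho(z,\bfi)^{-1}$. No discrepancies to report.
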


\begin{proof}
Making the change of variables $w=\Phi(\xi)$ and using \eqref{eqn:identity14phi} and \eqref{eqn:jacobian4phi},
the integral becomes
\begin{align*}
&\int\limits_{\ball} \left[\frac {1-\Psi(z)\cdot \overline{\xi}} {(1+[\Psi(z)]_{n+1})(1+\xi_{n+1})}\right]^{-n-2}
(1+\xi_{n+1})^{\theta-\kappa} (1+\overline{\xi}_{n+1})^{\kappa} \frac {4V(\xi)} {|1+\xi_{n+1}|^{2(n+2)}}\\
&\quad =~ 4(1+[\Psi(z)]_{n+1})^{n+2} \int\limits_{\ball}  \frac {dV(\xi)} {(1-\Psi(z)\cdot \overline{\xi})^{n+2}
(1+\overline{\xi}_{n+1})^{-\kappa} (1+\xi_{n+1})^{n+2-\theta+\kappa}}.
\end{align*}
By \eqref{eqn:crucial4}, this equals
\begin{align*}
& \frac {4\pi^{n+1}\Gamma(\theta)} {\Gamma(n+2+\kappa)\Gamma(\theta-\kappa)} (1+[\Psi(z)]_{n+1})^{n+2}
\hyperg {n+2-\theta+\kappa} {n+2} {n+2+\kappa} {-[\Psi(z)]_{n+1}}\\
&\quad =~ \frac {4\pi^{n+1}\Gamma(\theta)} {\Gamma(n+2+\kappa)\Gamma(\theta-\kappa)} (1+[\Psi(z)]_{n+1})^{\theta}
\hyperg {\theta} {\kappa} {n+2+\kappa} {-[\Psi(z)]_{n+1}},
\end{align*}
and \eqref{eqn:myformula2} is proved, in view of that $1+[\Psi(z)]_{n+1} = \bfrho(z,\bfi)^{-1}$.
\end{proof}


\section{Proof of Theorem \ref{thm:main2}}

\subsection{The upper estimate}
We first show that
\begin{equation*}
\|T_{\alpha}\|_{p\to p} ~\leq~ \frac {(n+1)! \Gamma\left(1+\alpha-\frac {1}{p}\right)
\Gamma\left(\frac {1}{p}\right)} {\Gamma^2\left(\frac {n+2+\alpha}{2}\right)}.
\end{equation*}
We shall distinguish two cases.

\subsubsection*{Case 1: $p=1$.}
In this case, the assumption $p(1+\alpha)>1$ implies that $\alpha>0$.
By Fubini's theorem and \eqref{eqn:keylem2}, we have
\begin{align*}
\|T_{\alpha} f\|_1 ~=~&  \frac {(n+1)!}{4\pi^{n+1}} \int\limits_{\calU^n} \Bigg| \int\limits_{\calU^n}
\frac {\bfrho(w,w)^{\alpha}} {|\bfrho(z,w)|^{n+2+\alpha}} f(w) dV(w)\Bigg| dV(z)\\
~\leq~& \frac {(n+1)!}{4\pi^{n+1}} \int\limits_{\calU^n} |f(w)|\bfrho(w,w)^{\alpha} \Bigg\{ \int\limits_{\calU^n}
\frac {dV(z)} {|\bfrho(z,w)|^{n+2+\alpha}} \Bigg\} dV(w)\\
=~& \frac {(n+1)! \Gamma(\alpha)} {\Gamma^2 (\frac {n+2+\alpha}{2})} \|f\|_1.
\end{align*}


\subsubsection*{Case 2: $1<p<\infty$.}

The proof appeals to Schur's test (Lemma \ref{lem:schurtest}).
With
\[
K(z,w) = \frac {(n+1)!}{4\pi^{n+1}} \frac {\bfrho(w,w)^{\alpha}} {|\bfrho(z,w)|^{n+2+\alpha}}
\]
and
\[
g(z)=\bfrho(z,z)^{-\frac {1}{pq}},
\]
where $q=p/(p-1)$, using \eqref{eqn:keylem2}, we see that
\begin{align*}
\int\limits_{\calU^n} K(z,w) g(w)^{q} dV(w) ~=~& \frac {(n+1)!}{4\pi^{n+1}}
\int\limits_{\calU^n} \frac {\bfrho(w,w)^{\alpha-\frac {1}{p}}} {|\bfrho(z,w)|^{n+2+\alpha}} dV(w)\\
=~& \frac {(n+1)!  \Gamma\left(1+\alpha-\frac {1}{p}\right) \Gamma\left(\frac {1}{p}\right)}
{\Gamma^2\left(\frac {n+2+\alpha}{2}\right)}\, \bfrho(z,z)^{-\frac {1}{p}}\\
=~& \frac {(n+1)!  \Gamma\left(1+\alpha-\frac {1}{p}\right) \Gamma\left(\frac {1}{p}\right)}
{\Gamma^2\left(\frac {n+2+\alpha}{2}\right)}\, g(z)^{q}
\end{align*}
holds for every $z\in \calU^n$. Similarly,
\begin{align*}
\int\limits_{\calU^n} K(z,w) g(z)^p dV(z) ~=~& \frac {(n+1)!}{4\pi^{n+1}} \bfrho(w,w)^{\alpha}
\int\limits_{\calU^n} \frac {\bfrho(z,z)^{-\frac {1}{q}}} {|\bfrho(z,w)|^{n+2+\alpha}} dV(z)\\
=~& \frac {(n+1)! \Gamma\left(1-\frac {1}{q}\right) \Gamma\left(\alpha+ \frac {1}{q}\right)}
{\Gamma^2\left(\frac {n+2+\alpha}{2}\right)}\, \bfrho(w,w)^{-\frac {1}{q}}\\
=~& \frac {(n+1)!  \Gamma\left(1+\alpha-\frac {1}{p}\right) \Gamma\left(\frac {1}{p}\right)}
{\Gamma^2\left(\frac {n+2+\alpha}{2}\right)}\, g(w)^{p}
\end{align*}
holds for every $w\in \calU^n$. Thus, by Schur's test, this yields the desired upper bound.

\subsection{The lower estimate}

We now proceed to show
\begin{equation*}
\|T_{\alpha}\|_{p\to p} ~\geq~ \frac {(n+1)! \Gamma\left(1+\alpha-\frac {1}{p}\right)
\Gamma\left(\frac {1}{p}\right)} {\Gamma^2\left(\frac {n+2+\alpha}{2}\right)}.
\end{equation*}

\subsubsection*{Case I: $(n+2+\alpha)p > n+3$.}

For notational convenience, we write $\beta:=\frac {n+2+\alpha}{2}$.
For $0<t<\frac {1}{p}$, we consider the function
\[
\psi_{t}(w):= \frac {\bfrho(w,w)^{-t}} {|\bfrho(w,\bfi)|^{2(\beta-t)}}, \quad w\in \calU^n.
\]
Note that the assumption  $(n+2+\alpha)p > n+3$ implies that
$(2\beta-t)p-n-2>0$, which guarantees that $\psi_{t}\in L^p(\calU^n)$ for all $t\in (0,\frac {1}{p})$. Indeed,
by applying \eqref{eqn:keylem2} with $\gamma=-tp$ and $\theta=(2\beta-t)p-n-2$, we have
\begin{equation*} 
\|\psi_{t}\|_p^{p} ~=~ \frac {4\pi^{n+1} \Gamma(1-t p) \Gamma((2\beta-t)p-n-2)} {\Gamma^2 \left((\beta-t)p\right)}.
\end{equation*}
This implies that
\begin{equation}\label{eqn:normofft}
\lim_{t\nearrow \frac {1}{p}} \|\psi_{t}\|_p^{-p}=0,
\end{equation}
since $\Gamma(1-t p)\to \infty$ as $t\nearrow \frac {1}{p}$.

Next, applying \eqref{eqn:myformula} with $\gamma=2\beta-t-n-2$ and $\theta=\beta$, we obtain
\begin{align*}
(T_{\alpha} \psi_{t}) (z) ~=~& \frac {(n+1)!}{4\pi^{n+1}} \int\limits_{\calU^{n}} \frac {\bfrho(w,w)^{2\beta-t-n-2}} {|\bfrho(z,w)|^{2\beta}
|\bfrho(w,\bfi)|^{2(\beta-t)}} dV(w)\\
=~& \frac {(n+1)! \Gamma(2\beta-t-n-1)} {\Gamma(2\beta-t)}\, \hyperg {\beta}
{\beta} {2\beta-t} {1-\frac {\bfrho(z,z)}{|\bfrho(\bfi,z)|^2}} \, |\bfrho(\bfi,z)|^{-2\beta}\\
=~& \frac {(n+1)! \Gamma(2\beta-t-n-1)} {\Gamma(2\beta-t)}\,\\
&\quad \times
\hyperg {\beta-t} {\beta-t} {2\beta-t} {1-\frac {\bfrho(z,z)}{|\bfrho(\bfi,z)|^2}} \frac {\bfrho(z,z)^{-t}} {|\bfrho(z,\bfi)|^{2(\beta-t)}}.
\end{align*}
The last equality follows from \eqref{eqn:euler}.
For simplicity, we rewrite this as
\begin{equation}\label{eqn:Tf-2ndexprn}
(T_{\alpha}\psi_{t})(z)=\frac {(n+1)! \Gamma(2\beta-t-n-1)\Gamma(t)} {\Gamma^2(\beta)}\,
H\left(t, 1-\frac {\bfrho(z,z)}{|\bfrho(\bfi,z)|^2}\right) \psi_{t}(z),
\end{equation}
with
\[
H(t,\lambda)~:=~ \frac {\Gamma^2(\beta)} {\Gamma(2\beta-t)\Gamma(t)}\,
\hyperg {\beta-t} {\beta-t} {2\beta-t} {\lambda}.
\]
Note that the above hypergeometric function is increasing on the interval $[0,1)$,
since its Taylor coefficients are all positive.


%

Now we think of $H(t,\lambda)$ as a family of continuous functions of $t$ on $[\frac {1}{2p},\frac {1}{p}]$ indexed by $\lambda\in [0,1)$.
These functions tend monotonically to the constant function $1$ pointwise as $\lambda \nearrow 1$,
by \eqref{eqn:gauss}.  Moreover, the convergence is uniform in $t$ on $[\frac {1}{2p},\frac {1}{p}]$, by Dini's theorem.

It follows that for any $\epsilon > 0$, there exists a $\delta>0$, sufficiently small and independent of $t\in [\frac {1}{2p},\frac {1}{p}]$, such that
\[
H(t,\lambda)\geq 1-\epsilon, \quad \text{provided } \lambda>1-\delta.
\]
This, together with \eqref{eqn:Tf-2ndexprn}, shows that
\begin{equation}\label{eqn:Tf-pointwiseestimate}
|(T_{\alpha} \psi_{t}) (z)| ~\geq~ (1-\epsilon) \frac {(n+1)!\Gamma(2\beta-t-n-1) \Gamma(t)} {\Gamma^2(\beta)}
\chi_{E_{\delta}}(z)|\psi_{t}(z)|
\end{equation}
holds for all $z\in \calU^n$ and all $t\in [\frac {1}{2p},\frac {1}{p})$, where
\begin{equation}\label{eqn:setEdelta}
E_{\delta}:=\left\{ z\in \calU: \frac {\bfrho(z,z)}{|\bfrho(\bfi,z)|^2} <\delta \right\},
\end{equation}
and $\chi_{E_{\delta}}$ denotes the indicator function for the set $E_{\delta}$.
Consequently,
\begin{align}\label{eqn:finalestimate}
\|T_{\alpha}\|_{p\to p} ~\geq~&  (1-\epsilon) \frac {(n+1)!\Gamma(2\beta-t-n-1) \Gamma(t)} {\Gamma^2(\beta)}
\Bigg\{ 1 - \|\psi_{t}\|_p^{-p} \int\limits_{\calU^n\setminus E_{\delta}} |\psi_{t}|^p dV  \Bigg\}^{\frac {1}{p}},
\end{align}
since $\|T_{\alpha}\|_{p\to p} \geq  \|T_{\alpha} \psi_{t}\|_p /\|\psi_{t}\|_p$.
%

Making the change of variables $w=\Phi(\xi)$ and using \eqref{eqn:cayleyidentity2} and \eqref{eqn:jacobian},
we have
\begin{align*}
\int\limits_{\calU^n\setminus E_{\delta}} |\psi_{t}|^p dV ~=~& \int\limits_{\calU^n\setminus E_{\delta}}  \frac {\bfrho(w,w)^{-tp}} {|\bfrho(\bfi,w)|^{2(\beta-t)p}} dV(w)\\
=~& \int\limits_{|\xi|\leq \sqrt{1-\delta}} \left(\frac {1-|\xi|^2} {|1+\xi_{n+1}|^2} \right)^{-tp}
|1+\xi_{n+1}|^{2(\beta-t)p} \frac {4 dV(\xi)}{|1+\xi_{n+1}|^{2(n+2)}}\\
=~& 4 \int\limits_{|\xi|\leq \sqrt{1-\delta}} \frac {(1-|\xi|^2)^{-tp}}
{|1+\xi_{n+1}|^{2(n+2-\beta p)}} dV(\xi).
\end{align*}
It is easily seen that if $|\xi| \leq \sqrt{1-\delta}$ then
\[
|1+\xi_{n+1}|^{2(\beta p-n-2)} \leq \max\left\{ \left(\frac{2}{\delta}\right)^{2(n+2-\beta p)},
2^{2(\beta p-n-2)} \right\}
\]
and
\[
(1-|\xi|^2)^{-tp} \leq \delta^{-tp} \leq \delta^{-1},
\]
hold for all $t\in [\frac {1}{2p},\frac {1}{p})$. It follows that
\[
\sup_{t\in [\frac {1}{2p},\frac {1}{p})} \int\limits_{\calU^n\setminus E_{\delta}} |\psi_{t}|^p dV \leq  C_{\delta},
\]
with
\begin{align*}
C_{\delta} ~:=~& \frac {2\pi^{n+1}} {(n+1)!} \frac {(1-\delta)^{n+1}} {\delta}
\max\left\{ \left(\frac{2}{\delta}\right)^{2(n+2-\beta p)},
2^{2(\beta p-n-2)} \right\}.
\end{align*}
Keep in mind that $\delta$ is independent of $t\in [\frac {1}{2p},\frac {1}{p})$. Combining this with \eqref{eqn:normofft}, we obtain
\[
\lim_{t\nearrow \frac {1}{p}} \|\psi_{t}\|_p^{-p} \int\limits_{\calU^n\setminus E_{\delta}} |\psi_{t}|^p dV =0.
\]

Now, letting $t\nearrow \frac {1}{p}$ in \eqref{eqn:finalestimate}, we conclude that
\begin{align*}
\|T_{\alpha}\|_{p\to p} ~\geq~&  (1-\epsilon) \frac {(n+1)! \Gamma\left(2\beta-n-1-\frac {1}{p}\right)
\Gamma\left(\frac {1}{p}\right)} {\Gamma^2(\beta)}\\
~=~& (1-\epsilon) \frac {(n+1)! \Gamma\left(1+\alpha-\frac {1}{p}\right)\Gamma\left(\frac {1}{p}\right)}
{\Gamma^2\left(\frac {n+2+\alpha}{2}\right)}.
\end{align*}
Since $\epsilon$ is arbitrary, this yields
\[
\|T_{\alpha}\|_{p\to p} ~\geq~  \frac {(n+1)! \Gamma\left(1+\alpha-\frac {1}{p}\right)\Gamma\left(\frac {1}{p}\right)}
{\Gamma^2\left(\frac {n+2+\alpha}{2}\right)}.
\]

\subsubsection*{Case II: $(n+2+\alpha)p \leq n+3$.}

In this case, the above test function $\psi_t$ does not belong to $L^p(\calU^n)$.
Instead, for $t\in \left(\max\{0,-\alpha\},\frac {1}{q}\right)$, we consider the function
\[
\tilde{\psi}_{t}(w):= \frac {\bfrho(w,w)^{-t}} {|\bfrho(w,\bfi)|^{2(\tilde{\beta}-t)}}, \quad w\in \calU^n,
\]
where
\[
q:=\frac {p}{p-1} \quad \text{ and }\quad  \tilde{\beta}:=\frac {n+2-\alpha}{2}.
\]
Note that the assumption  $(n+2+\alpha)p \leq  n+3$ implies
$(2\tilde{\beta}-t)q-n-2>0$, which guarantees that $\tilde{\psi}_{t}\in L^q(\calU^n)$ for all $t\in (0,\frac {1}{q})$. Indeed,
by applying \eqref{eqn:keylem2} with $\gamma=-tq$ and $\theta=(2\tilde{\beta}-t)q-n-2$, we have
\begin{equation*} 
\|\tilde{\psi}_{t}\|_q^{q} ~=~ \frac {4\pi^{n+1} \Gamma(1-t q) \Gamma((2\tilde{\beta}-t)q-n-2)}
{\Gamma^2 ((\tilde{\beta}-t)q)}.
\end{equation*}
This also implies that
\begin{equation}\label{eqn:normofft}
\lim_{t\nearrow \frac {1}{q}} \|\tilde{\psi}_{t}\|_q^{-q}=0,
\end{equation}
since $\Gamma(1-t q)\to \infty$ as $t\nearrow \frac {1}{q}$.

Note that the adjoint of $T_{\alpha}$ is given by
\[
(T_{\alpha}^{\ast} f) (z) = \frac {(n+1)!} {4\pi^{n+1}} \bfrho(z,z)^{\alpha}
\int_{\calU^n} \frac {f(w)} {|\bfrho(z,w)|^{n+2+\alpha}} dV(w).
\]
Applying \eqref{eqn:myformula} with $\gamma=-t$ and $\theta=n+2-\tilde{\beta}$, we obtain
\begin{align*}
(T_{\alpha}^{\ast} \tilde{\psi}_{t}) (z) ~=~& \frac {(n+1)!}{4\pi^{n+1}} \bfrho(z,z)^{n+2-2\tilde{\beta}}
\int\limits_{\calU^{n}} \frac {\bfrho(w,w)^{-t}} {|\bfrho(z,w)|^{2(n+2-\tilde{\beta})}
|\bfrho(w,\bfi)|^{2(\tilde{\beta}-t)}} dV(w)\\
=~& \frac {(n+1)! \Gamma(1-t)} {\Gamma(n+2-t)}\,
\hyperg {\tilde{\beta}-t} {\tilde{\beta}-t} {n+2-t} {1-\frac {\bfrho(z,z)}{|\bfrho(\bfi,z)|^2}} \frac {\bfrho(z,z)^{-t}} {|\bfrho(z,\bfi)|^{2(\tilde{\beta}-t)}}.
\end{align*}

A similar argument as in Case I shows that for any $\epsilon > 0$, there exists a $\delta>0$ such that
\begin{equation*}
|(T_{\alpha}^{\ast} \tilde{\psi}_{t}) (z)| ~\geq~ (1-\epsilon) \frac {(n+1)!\Gamma(n+2+t-2\tilde{\beta}) \Gamma(1-t)}
{\Gamma^2(n+2-\tilde{\beta})}
\chi_{E_{\delta}}(z)|\tilde{\psi}_{t}(z)|
\end{equation*}
holds for all $z\in \calU^n$ and all $t\in \left[\max\{-\alpha,\frac {1}{2q}\},\frac {1}{q}\right)$, where $E_{\delta}$
is as in \eqref{eqn:setEdelta} and $\chi_{E_{\delta}}$ denotes the indicator function for the set $E_{\delta}$.
Consequently,
\begin{align}\label{eqn:finalestimate2}
\|T_{\alpha}^{\ast}\|_{q\to q} ~\geq~&  (1-\epsilon) \frac {(n+1)!\Gamma(n+2-2\tilde{\beta}+t) \Gamma(1-t)}
{\Gamma^2(n+2-\tilde{\beta})} \notag \\
&\qquad \times \Bigg\{ 1 - \|\tilde{\psi}_{t}\|_q^{-q} \int\limits_{\calU^n\setminus E_{\delta}} |\tilde{\psi}_{t}|^q dV  \Bigg\}^{\frac {1}{q}}.
\end{align}
Exactly as in Case I, we can let $t\nearrow \frac {1}{q}$ in \eqref{eqn:finalestimate2} to yield
\begin{align*}
\|T_{\alpha}^{\ast}\|_{q\to q} ~\geq~&  (1-\epsilon) \frac {(n+1)! \Gamma\left(n+2-2\tilde{\beta}+\frac {1}{q}\right)
\Gamma\left(1-\frac {1}{q}\right)} {\Gamma^2(n+2-\tilde{\beta})}\\
~=~& (1-\epsilon) \frac {(n+1)! \Gamma\left(1+\alpha-\frac {1}{p}\right)\Gamma\left(\frac {1}{p}\right)}
{\Gamma^2\left(\frac {n+2+\alpha}{2}\right)}.
\end{align*}
Since $\epsilon$ is arbitrary, this yields
\[
\|T_{\alpha}\|_{p\to p} ~=~ \|T_{\alpha}^{\ast}\|_{q\to q} ~\geq~  \frac {(n+1)! \Gamma\left(1+\alpha-\frac {1}{p}\right)\Gamma\left(\frac {1}{p}\right)}
{\Gamma^2\left(\frac {n+2+\alpha}{2}\right)}.
\]
The proof is now complete.

\section{Proof of Theorem \ref{thm:main0}}

As is mentioned in the introduction, the second inequality in \eqref{eqn:bergmanestimate}
follows immediately from Theorem \ref{thm:main2},
so we prove only the first inequality.

Further, we only need to consider the case when $p\geq 2$, and
the case when $1<p<2$ then follows from the duality.

For notational convenience, we put
\[
\kappa:=(n+2)\left(\frac {1}{2} - \frac {1}{p}\right) \quad \text{and} \quad
\theta:=\frac {n+2}{p}.
\]
Note that $\theta+\kappa=\frac {n+2}{2}$.

For $0<\epsilon<\kappa$, we consider the function
\[
f_{\epsilon}(z) := \frac {1}{\bfrho(\bfi,z)^{\epsilon-\kappa}  \bfrho(z,\bfi)^{\theta+\kappa}}, \quad z\in \calU^n.
\]
Using \eqref{eqn:myformula2} we get
\begin{align*}
\left(P_{\calU^n} f_{\epsilon}\right) (z) =& \frac {(n+1)!}{4\pi^{n+1}}\int\limits_{\calU^n} \frac {dV(w)}
{\bfrho(z,w)^{n+2} \bfrho(\bfi, w)^{\epsilon-\kappa} \bfrho(w,\bfi)^{\theta+\kappa}}  \notag\\
=& \frac {(n+1)!\, \Gamma(\theta+\epsilon)}{\Gamma(\theta+\kappa)\Gamma(n+2+\epsilon-\kappa)}
\hyperg {\epsilon-\kappa} {\theta+\epsilon} {n+2+\epsilon-\kappa} {1-\frac {1}{\bfrho(z,\bfi)}}
\frac {1}{\bfrho(z,\bfi)^{\theta+\epsilon}}.
\end{align*}
Now we consider the functions
\begin{align*}
g_{\epsilon}(z) ~:=~& \frac {(n+1)! \Gamma(\theta+\epsilon)}{\Gamma(\theta+\kappa)\Gamma(n+2+\epsilon-\kappa)}
\hyperg {\epsilon-\kappa} {\theta+\epsilon} {n+2+\epsilon-\kappa} {1} \frac {1}{\bfrho(z,\bfi)^{\theta+\epsilon}}, \\
=~& \frac {\Gamma(\theta+\epsilon)\Gamma(n+2-\theta-\epsilon)} {\Gamma(\theta+\kappa)\Gamma(n+2-\kappa-\theta)}
\, \frac {1}{\bfrho(z,\bfi)^{\theta+\epsilon}}, \\
\intertext{and}
h_{\epsilon}(z) ~:=~& \frac {(n+1)! \Gamma(\theta+\epsilon)}{\Gamma(\theta+\kappa)\Gamma(n+2+\epsilon-\kappa)}\,
\frac {1}{\bfrho(z,\bfi)^{\theta+\epsilon}}, \\
& \quad \times \left\{\hyperg {\epsilon-\kappa} {\theta+\epsilon} {n+2+\epsilon-\kappa} {1-\frac {1}{\bfrho(z,\bfi)}}
- \hyperg {\epsilon-\kappa} {\theta+\epsilon} {n+2+\epsilon-\kappa} {1} \right\}.
\end{align*}
It is clear that $P_{\calU^n}f_{\epsilon}=g_{\epsilon} + h_{\epsilon}$, and hence
\begin{equation*}
\|P_{\calU^n}\|_p ~\geq~ \limsup_{\epsilon\to 0^{+}}\frac {\|P_{\calU^n}f_{\epsilon}\|_p} {\|f_{\epsilon}\|_p}
~\geq~ \limsup_{\epsilon\to 0^{+}} \left(\frac {\|g_{\epsilon}\|_p} {\|f_{\epsilon}\|_p}
- \frac {\|h_{\epsilon}\|_p} {\|f_{\epsilon}\|_p} \right).
\end{equation*}
It is clear that
\[
\|g_{\epsilon}\|_p = \frac {\Gamma(\theta+\epsilon)\Gamma(n+2-\theta-\epsilon)}
{\Gamma(\theta+\kappa)\Gamma(n+2-\kappa-\theta)} \|f_{\epsilon}\|_p,
\]
and hence
\begin{align*}
\lim_{\epsilon\to 0^{+}} \frac {\|g_{\epsilon}\|_p} {\|f_{\epsilon}\|_p}
~=~& \frac {\Gamma(\theta)\Gamma(n+2-\theta)} {\Gamma(\theta+\kappa)\Gamma(n+2-\kappa-\theta)}\\
~=~& \frac {\Gamma\left(\frac {n+2}{p}\right)\Gamma\left(n+2-\frac {n+2}{p}\right)}
{\Gamma^2\left(\frac {n+2}{2}\right)}.
\end{align*}
Thus we are reduced to proving
\begin{equation*}
\limsup_{\epsilon\to 0^{+}} \frac {\|h_{\epsilon}\|_p} {\|f_{\epsilon}\|_p} ~=~ 0.
\end{equation*}
Using \eqref{eqn:keylem2}, we see that
\[
\|f_{\epsilon}\|_p^p ~=~ \int\limits_{\calU^n} \frac {dV(w)} {|\bfrho(\bfi,w)|^{n+2+p\epsilon}}
~=~ \frac {4\pi^{n+1} \Gamma(p\epsilon)} {\Gamma^2\left(\frac {n+2+p\epsilon}{2}\right)} ~\to~ \infty \quad \text{ as } \epsilon\to 0^{+}.
\]
Hence, the proof is completed by showing that
\begin{equation}\label{eqn:unifbdd}
\sup\limits_{0<\epsilon<\kappa} \|h_{\epsilon}\|_p < \infty.
\end{equation}
Indeed, by Lemma \ref{lem:est4hyperg}, it is easy to check that
\[
|h_{\epsilon}(z)| ~\leq~ \frac {C(\epsilon)} {|\bfrho(z,\bfi)|^{\theta+1+\epsilon}}
\]
holds for all $z\in \calU^n$, with the constant
\begin{align*}
C(\epsilon) ~:=~& \frac {|\epsilon-\kappa|\Gamma(\theta+\epsilon)}{\Gamma(\theta+\kappa)}
\max \left\{ \frac {2^{\kappa-\epsilon-1} (n+1)!}
{\Gamma(n+2+\epsilon-\kappa)}, \frac {(n+1) \Gamma(n+1-\theta-\epsilon)}
{\Gamma(n+2-\kappa-\theta)} \right\}.
\end{align*}
Therefore, by \eqref{eqn:keylem2}, we obtain
\begin{align*}
\|h_{\epsilon}\|_p^p ~\leq~ C(\epsilon)^p \int\limits_{\calU^n} \frac {dV(w)} {|\bfrho(w,\bfi)|^{n+2+p(1+\epsilon)}}
~=~ C(\epsilon)^p \cdot \frac {4\pi^{n+1} \Gamma(p(1+\epsilon))} {\Gamma^2\left(\frac {n+2+p(1+\epsilon)}{2}\right)},
\end{align*}
and \eqref{eqn:unifbdd} easily follows. The proof is now complete.

\section{Proof of Theorem \ref{thm:twonorms}}

We first present two auxiliary lemmas, which are interesting for their own sake.

\begin{lem}\label{prop:psh}
If $F$ is plurisubharmonic in $\calU^n$ ($n\geq 1$) and $F\geq 0$, then
\begin{equation}\label{eqn:est4psh}
\pi \int\limits_{\calU^{n-1}} F(0,w) dm_{2n} (w) ~\leq~  \sup_{t>0} \int\limits_{b\calU^{n}} F(z+t\bfi) d\bfbeta(z).
\end{equation}
\end{lem}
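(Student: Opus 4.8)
The plan is to combine the Forelli-type identity \eqref{eqn:forellispcase1} with the sub-mean-value property of subharmonic functions. Throughout I write a point of $\calU^n\subset\bbC^{n+1}$ as $(z_1,w)$ with $z_1\in\bbC$ and $w=(z_2,\ldots,z_{n+1})\in\bbC^n$. With this splitting, $w\in\calU^{n-1}$ together with $|z_1|^2=\bfrho_{n-1}(w)$ describes $b\calU^n$, while the translation by $\bfi=(0',i)$ acts only on the final coordinate; so for $w\in\bbC^n$ I abbreviate by $w+t\bfi$ the corresponding vertical shift (the lower-dimensional analogue of the $\bfi$-translation), and note that $\bfrho_{n-1}(w+t\bfi)=\bfrho_{n-1}(w)+t$. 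Since $F$ is plurisubharmonic it is upper semicontinuous and nonnegative, so all integrals below are well defined in $[0,\infty]$ and the identity \eqref{eqn:forellispcase1} applies to it (extending to nonnegative measurable functions by monotone convergence, with no integrability hypothesis needed).

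First I would apply \eqref{eqn:forellispcase1} to $f(z):=F(z+t\bfi)$, for each fixed $t>0$, obtaining
\begin{equation*}
\int\limits_{b\calU^n} F(z+t\bfi)\, d\bfbeta(z) = \pi\int\limits_{\calU^{n-1}} \left\{ \frac{1}{2\pi}\int\limits_0^{2\pi} F\bigl(\sqrt{\bfrho_{n-1}(w)}\,e^{i\theta},\, w+t\bfi\bigr)\, d\theta\right\} dm_{2n}(w).
\end{equation*}
For fixed $w$, the inner average is the mean of $F$ over the circle $\{|z_1|=\sqrt{\bfrho_{n-1}(w)}\}$ with the remaining coordinates frozen at $w+t\bfi$. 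The slice $z_1\mapsto F(z_1,w+t\bfi)$ is subharmonic on $\{|z_1|^2<\bfrho_{n-1}(w)+t\}$, which is precisely the intersection of the complex line through the frozen coordinates with $\calU^n$. The sub-mean-value inequality then gives
\begin{equation*}
\frac{1}{2\pi}\int\limits_0^{2\pi} F\bigl(\sqrt{\bfrho_{n-1}(w)}\,e^{i\theta},\, w+t\bfi\bigr)\, d\theta ~\geq~ F(0,\,w+t\bfi),
\end{equation*}
and therefore
\begin{equation*}
\int\limits_{b\calU^n} F(z+t\bfi)\, d\bfbeta(z) ~\geq~ \pi\int\limits_{\calU^{n-1}} F(0,\,w+t\bfi)\, dm_{2n}(w).
\end{equation*}

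Finally I would translate and pass to the limit. Substituting $v=w+t\bfi$ and using translation invariance of Lebesgue measure turns the right-hand side into $\pi\int_{\{v\in\calU^{n-1}:\,\bfrho_{n-1}(v)>t\}} F(0,v)\,dm_{2n}(v)$. As $t\searrow 0$ these domains increase to all of $\calU^{n-1}$, so by monotone convergence (here $F\geq 0$ is essential) the integrals increase to $\pi\int_{\calU^{n-1}} F(0,v)\,dm_{2n}(v)$. Taking the supremum over $t>0$ in the last display yields \eqref{eqn:est4psh}.

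The genuinely delicate step — and the reason the statement features $\sup_{t>0}$ rather than a single evaluation on $b\calU^n$ — is the disk-containment check legitimizing the sub-mean-value estimate: one must verify that the frozen slice $\{(z_1,w+t\bfi):z_1\in\bbC\}\cap\calU^n$ is the disk of radius $\sqrt{\bfrho_{n-1}(w)+t}$, which strictly contains the integration circle of radius $\sqrt{\bfrho_{n-1}(w)}$ \emph{exactly because} $t>0$. The remaining points are routine: justifying \eqref{eqn:forellispcase1} for the nonnegative function $F(\cdot+t\bfi)$ via Tonelli, and the monotone passage to the limit.
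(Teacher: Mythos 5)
Your proposal is correct and follows essentially the same route as the paper: apply the Forelli-type identity \eqref{eqn:forellispcase1} to $F(\cdot+t\bfi)$, bound the circular mean from below by $F((0,w)+t\bfi)$ using subharmonicity of the $z_1$-slice, translate to rewrite the resulting integral over $\{\bfrho_{n-1}>t\}$, and let $t\searrow 0$ by monotone convergence. Your added remarks on the disk-containment justification and on extending \eqref{eqn:forellispcase1} to nonnegative measurable functions only make explicit what the paper leaves implicit.
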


\begin{proof}
Note that $(\sqrt{\bfrho_{n-1}(w)} \, e^{i\theta}, w)\in b\calU^n$ for any $w\in \calU^{n-1}$ and any $\theta\in [0,2\pi)$. For $t>0$,
\[
F((0,w)+t\bfi) ~\leq~ \frac {1}{2\pi} \int\limits_{0}^{2\pi} F((\sqrt{\bfrho_{n-1}(w)}\, e^{i\theta}, w)+t\bfi) d\theta,
\]
since $f$ is plurisubharmonic. By Lemma \eqref{eqn:forellispcase1}, it follows that
\begin{align}\label{eqn:tmpno}
\pi \int\limits_{\calU^{n-1}} F(& (0,w) + t\bfi) dm_{2n}(w) \\
& \qquad ~\leq~ \pi \int\limits_{\calU^{n-1}} \Bigg\{ \frac {1}{2\pi}
\int\limits_{0}^{2\pi} F((\sqrt{\bfrho_{n-1}(w)}\, e^{i\theta}, w)+t\bfi) d\theta \Bigg\}  dm_{2n}(w) \notag\\
& \qquad ~=~ \int\limits_{b\calU^{n}} F(z+t\bfi) d\bfbeta(z). \notag
\end{align}
Note that
\[
\int\limits_{\calU^{n-1}} F((0,w)+t\bfi) dm_{2n}(w) ~=~ \int\limits_{\{\bfrho_{n-1}(w)>t\}} F(0,w) dm_{2n}(w).
\]
Hence \eqref{eqn:est4psh} follows from \eqref{eqn:tmpno} as $t\searrow 0$.
\end{proof}

Let $f$ and $g$ be functions with domains $\calU^n$ and $\calU^{n-1}$, respectively.
We define a restriction operator $\mathfrak{R}$ and an extension operator $\mathfrak{E}$ by
\begin{align*}
(\mathfrak{R} f)(\tilde{z}) ~=~& f(0,\tilde{z}) \qquad (\tilde{z}\in \calU^{n-1}),\\
(\mathfrak{E} g)(z_1,\tilde{z}) ~=~ & g(\tilde{z}) \qquad (z=(z_1,\tilde{z})\in \calU^{n}).
\end{align*}

\begin{lem}\label{thm:embedding}
Suppose $n\geq 1$ and $0 < p < \infty$. Then the restriction operator $\mathfrak{R}$ maps $H^p(\calU^n)$
boundedly onto $A^p(\calU^{n-1})$, with operator norm equal to $\pi^{-\frac {1}{p}}$.
\end{lem}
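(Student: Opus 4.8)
The plan is to prove the norm equality $\|\mathfrak{R}\|=\pi^{-1/p}$ by establishing the two matching bounds, with the surjectivity onto $A^p(\calU^{n-1})$ emerging as a by-product of the lower bound. First I observe that for $f\in H^p(\calU^n)$ the restriction $\mathfrak{R}f(\tilde z)=f(0,\tilde z)$ is holomorphic on $\calU^{n-1}$, since the slice $\{z_1=0\}\cap\calU^n$ is precisely $\calU^{n-1}$. For the upper bound I would apply Lemma \ref{prop:psh} to the nonnegative function $F:=|f|^p$, which is plurisubharmonic because $f$ is holomorphic and $p>0$. This yields
\[
\pi\int\limits_{\calU^{n-1}}|f(0,w)|^p\,dm_{2n}(w)\leq \sup_{t>0}\int\limits_{b\calU^n}|f(z+t\bfi)|^p\,d\bfbeta(z)=\|f\|_{H^p}^p.
\]
The left-hand side equals $\pi\|\mathfrak{R}f\|_p^p$, so $\|\mathfrak{R}f\|_p\leq\pi^{-1/p}\|f\|_{H^p}$; in particular $\mathfrak{R}$ maps $H^p(\calU^n)$ boundedly into $A^p(\calU^{n-1})$ and $\|\mathfrak{R}\|\leq\pi^{-1/p}$.

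For the lower bound and surjectivity I would exploit the extension operator $\mathfrak{E}$, which is a right inverse of $\mathfrak{R}$, i.e. $\mathfrak{R}\mathfrak{E}=\mathrm{id}$ on $\calU^{n-1}$. The crux is the exact identity $\|\mathfrak{E}g\|_{H^p}^p=\pi\|g\|_p^p$ for every holomorphic $g$ on $\calU^{n-1}$; here $\mathfrak{E}g$ is holomorphic on $\calU^n$ since it is independent of $z_1$ and $(z_1,\tilde z)\in\calU^n$ forces $\tilde z\in\calU^{n-1}$. To prove the identity I would apply the change-of-variables formula \eqref{eqn:forellispcase1} to the boundary function $u\mapsto|(\mathfrak{E}g)(u+t\bfi)|^p$. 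Evaluating $\mathfrak{E}g$ at the boundary point $(\sqrt{\bfrho_{n-1}(w)}\,e^{i\theta},w)+t\bfi$ simply discards the first coordinate and translates the last coordinate by $it$, so the inner circular average collapses and
\[
\int\limits_{b\calU^n}|(\mathfrak{E}g)(u+t\bfi)|^p\,d\bfbeta(u)=\pi\int\limits_{\{w\in\calU^{n-1}:\,\bfrho_{n-1}(w)>t\}}|g(w)|^p\,dm_{2n}(w),
\]
after a measure-preserving translation of the last variable by $it$. This integral decreases in $t$, so its supremum over $t>0$ is attained as $t\searrow0$ and, by monotone convergence, equals $\pi\|g\|_p^p$. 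Consequently $\mathfrak{E}$ maps $A^p(\calU^{n-1})$ into $H^p(\calU^n)$ with $\|\mathfrak{E}g\|_{H^p}=\pi^{1/p}\|g\|_p$; for any $g\in A^p(\calU^{n-1})$ with $g\neq0$, the function $f=\mathfrak{E}g$ satisfies $\mathfrak{R}f=g$ and $\|\mathfrak{R}f\|_p/\|f\|_{H^p}=\pi^{-1/p}$, which proves both $\|\mathfrak{R}\|\geq\pi^{-1/p}$ and that $\mathfrak{R}$ is onto.

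I expect the main obstacle to be the identity $\|\mathfrak{E}g\|_{H^p}^p=\pi\|g\|_p^p$: one must correctly unwind the measure $d\bfbeta$ through \eqref{eqn:forellispcase1}, verify that the boundary evaluation of $\mathfrak{E}g$ is independent of $\theta$ and reduces to $|g|^p$ translated in the last coordinate, and justify that the supremum over $t$ is realized in the limit $t\searrow0$ by combining monotonicity in $t$ with monotone convergence. Once this computation is secured, the remaining assertions follow directly from Lemma \ref{prop:psh} and the relation $\mathfrak{R}\mathfrak{E}=\mathrm{id}$.
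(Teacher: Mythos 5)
Your proposal is correct and follows essentially the same route as the paper: the upper bound via Lemma \ref{prop:psh} applied to $F=|f|^p$, and surjectivity via the extension operator $\mathfrak{E}$ together with the identity \eqref{eqn:forellispcase1}. You are in fact slightly more complete than the paper, which stops at $\|\mathfrak{E}f\|_{H^p}^p\leq\pi\|f\|_{A^p}^p$ and leaves the matching lower bound $\|\mathfrak{R}\|\geq\pi^{-1/p}$ implicit, whereas you correctly upgrade this to the exact equality $\|\mathfrak{E}g\|_{H^p}^p=\pi\|g\|_p^p$ by monotone convergence as $t\searrow 0$ and deduce the sharpness of the norm.
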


\begin{proof}
Let $f\in H^p(\calU^n)$. Applying Proposition \ref{prop:psh} to $F=|f|^p$, we obtain
\[
 \|\mathfrak{R}f\|_{A^p(\calU^{n-1})}^p  ~\leq~  \pi^{-1}\, \|f\|_{H^p(\calU^n)}^p.
\]
This means that $\mathfrak{R}$ maps $H^p(\calU^n)$
boundedly into $A^p(\calU^{n-1})$, with operator norm at most $\pi^{-\frac {1}{p}}$.

To see that $\mathfrak{R}$ maps $H^p(\calU^n)$ onto $A^p(\calU^{n-1})$, let $f\in A^p(\calU^{n-1})$.
Since $f=\mathfrak{R}\mathfrak{E}f$,  it suffices for us to show that $\mathfrak{E}f$ belongs to $H^p(\calU^n)$.
For any $t>0$, again by the identity \eqref{eqn:forellispcase1}, we have
\begin{align*}
\int\limits_{b\calU^n} & |\mathfrak{E}f  (z+t\bfi)|^p d\bfbeta(z) \\
&\quad ~=~ \pi \int\limits_{\calU^{n-1}} \Bigg\{ \frac {1}{2\pi}
\int\limits_{0}^{2\pi} \left|\mathfrak{E} f((\sqrt{\bfrho_{n-1}(w)}\, e^{i\theta}, w)+t\bfi)\right|^p d\theta \Bigg\}  dm_{2n}(w)\\
&\quad ~=~ \pi \int\limits_{\calU^{n-1}}  |f( w+t\tilde{\bfi})|^p  dm_{2n}(w)
\qquad \qquad \qquad (\tilde{\bfi}=(\underbrace{0,\ldots, 0}_{n-1},i)) \\
&\quad ~=~ \pi \int\limits_{\{\bfrho_{n-1}(w)>t\}}  |f(w)|^p  dm_{2n}(w)  ~\leq ~ \pi \, \|f\|_{A^p(\calU^{n-1})},
\end{align*}
which completes the proof.
\end{proof}

We now turn to the proof of Theorem \ref{thm:twonorms}.

\begin{proof}[Proof of Theorem \ref{thm:twonorms}]

Let $f\in L^p(\calU^{n-1})$ be arbitrary, with $\|f\|_{L^p(\calU^{n-1})}=1$.
Put
\[
g :=\pi^{-\frac {1}{p}}\, \mathfrak{E}  f
\]

By \eqref{eqn:prjformula2}, we see that $g\in L^p(b\calU^n)$ and
\[
\|g\|_{L^p(b\calU^n)} = \|f\|_{L^p(\calU^{n-1})}=1.
\]
%

Note also that
\begin{align*}
S_{\calU^{n}} ((0,\tilde{z}),u) \, g(u) ~=~ \pi^{-1-\frac {1}{p}} K_{\calU^{n-1}} (\tilde{z},\tilde{u}) f(\tilde{u})
\end{align*}
holds for any $\tilde{z}\in \calU^{n-1}$ and $u=(u_1,\tilde{u})\in b\calU^n$, where $K_{\calU^{n-1}} (\cdot,\cdot)$
is the Bergman kernel for $\calU^{n-1}$ and $S_{\calU^n}(\cdot,\cdot)$ is the Cauchy-Szeg\"o kernel for $\calU^n$.
It follows that
\begin{align*}
(C_{\calU^n}g)((0,\tilde{z})) ~=~& \pi^{-1-\frac {1}{p}} \int\limits_{b\calU^n}
K_{\calU^{n-1}} (\tilde{z},\tilde{u}) f(\tilde{u})\, d\bfbeta(u)\\
=~& \pi^{-\frac {1}{p}} \int\limits_{\calU^{n-1}} K_{\calU^{n-1}} (\tilde{z},w) f(w) dm_{2n}(w)\\
~=~& \pi^{-\frac {1}{p}}\, (P_{\calU^{n-1}}f)(\tilde{z}),
\end{align*}
where in the second equality we used \eqref{eqn:prjformula2}. Thus, by Theorem \ref{thm:embedding},
\[
\|P_{\calU^{n-1}}f\|_{A^p(\calU^{n-1})} ~=~ \pi\, \|\mathfrak{R} (C_{\calU^n}g)\|_{A^p(\calU^{n-1})}
~\leq~ \|C_{\calU^n}g\|_{H^p(\calU^{n})}.
\]
Taking supremum over all $f\in L^p(\calU^{n-1})$ with $\|f\|_{L^p(\calU^{n-1})}=1$ yields \eqref{eqn:normcomparison}.
\end{proof}

\end{document}